\pgfplotsset{compat=1.18}
\let\oldmarginpar\marginpar
\renewcommand{\marginpar}[2][rectangle,draw,text width= 2cm,rounded corners]{
    \oldmarginpar{
    \scriptsize \tikz \node at (0,0) [#1]{#2};}
    }
\def\mvint_#1{\mathchoice
          {\mathop{\vrule width 6pt height 3 pt depth -2.5pt
                  \kern -9pt \intop}\limits_{\kern -3pt #1}}%
          {\mathop{\vrule width 5pt height 3 pt depth -2.6pt
                  \kern -6pt \intop}\nolimits_{#1}}%
          {\mathop{\vrule width 5pt height 3 pt depth -2.6pt
                  \kern -6pt \intop}\nolimits_{#1}}%
          {\mathop{\vrule width 5pt height 3 pt depth -2.6pt
                  \kern -6pt \intop}\nolimits_{#1}}}
\newcommand{\bbbr}{\mathbb R}
\newcommand{\bbbn}{\mathbb N}
\newcommand{\R}{\mathbb R}
\newcommand{\eps}{\varepsilon}
\newtheorem{theorem}{Theorem}[section]
\newtheorem*{theorem*}{Theorem}
\newtheorem{lemma}[theorem]{Lemma}
\newtheorem{conjecture}[theorem]{Conjecture}
\newtheorem{corollary}[theorem]{Corollary}
\theoremstyle{definition}
\newtheorem*{remark*}{Remark}
\newcommand*{\loc}{{\mathrm{loc}}}
\renewcommand{\tocsection}[3]{%
  \indentlabel{\@ifnotempty{#2}{\bfseries\ignorespaces#1 #2\quad}}\bfseries#3}
\renewcommand{\tocsubsection}[3]{%
  \indentlabel{\@ifnotempty{#2}{\ignorespaces#1 #2\quad}}#3}
\newcommand\@dotsep{4.5}
\def\@tocline#1#2#3#4#5#6#7{\relax
  \ifnum #1>\c@tocdepth 
  \else
    \par \addpenalty\@secpenalty\addvspace{#2}%
    \begingroup \hyphenpenalty\@M
    \@ifempty{#4}{%
      \@tempdima\csname r@tocindent\number#1\endcsname\relax
    }{%
      \@tempdima#4\relax
    }%
    \parindent\z@ \leftskip#3\relax \advance\leftskip\@tempdima\relax
    \rightskip\@pnumwidth plus1em \parfillskip-\@pnumwidth
    #5\leavevmode\hskip-\@tempdima{#6}\nobreak
    \leaders\hbox{$\m@th\mkern \@dotsep mu\hbox{.}\mkern \@dotsep mu$}\hfill
    \nobreak
    \hbox to\@pnumwidth{\@tocpagenum{\ifnum#1=1\bfseries\fi#7}}\par
    \nobreak
    \endgroup
  \fi}
\renewcommand\csname r@tocindent0\endcsname{0pt}
\def\l@subsection{\@tocline{2}{-5pt}{2.5pc}{5pc}{}}
\title[Approximation of convex functions]{$\mathbf{C^2}$-Lusin approximation of convex functions: one variable case}
\author[P. Goldstein]{Pawe\l{}  Goldstein}
\address{Pawe\l{} Goldstein, \newline \indent Institute of Mathematics, Faculty of Mathematics, Informatics and Mechanics, \newline \indent University of Warsaw, Banacha 2, 02-097 Warsaw, Poland} \email{P.Goldstein@mimuw.edu.pl}
\thanks{P.G.\ was supported by NCN grant no 2019/35/B/ST1/02030}
\author[Haj\l{}asz]{Piotr Haj\l{}asz}
\address{Piotr Haj\l{}asz,\newline \indent Department of Mathematics, University of Pittsburgh, \newline \indent 301 Thackeray Hall, Pittsburgh,
Pennsylvania 15260}
\email{hajlasz@pitt.edu}
\thanks{P.H. was supported by an NSF grant DMS-2452426}
\keywords{Convex functions, $C^2$-functions, Approximation, Lusin property}
\subjclass[2020]{26A51;  41A29, 26B25}
\begin{document}

\begin{abstract}
We prove that if $f:(a,b)\to\R$ is convex, then for any $\eps>0$ there is a convex function $g\in C^2(a,b)$ such that $|\{f\neq g\}|<\eps$ and $\Vert f-g\Vert_\infty<\eps$.
\end{abstract}

\maketitle

\section{Introduction}
\label{intro}

It has been known for at least thirty years that convex functions have the {\em $C^2$-Lusin property}, meaning that if $f:\R^n\to\R$ is convex, then for every $\eps>0$ there is a function $g\in C^2(\R^n)$ such that $|\{f\neq g\}|<\eps$, see \cite{Alberti2,EvansGangbo,Imomkulov}. Here $|A|$ denotes the Lebesgue measure of $A$. In that case we say that $g$ is a {\em Lusin approximation of $f$}.

However, in general the function $g$ cannot be convex. For example for $f:\R^2\to\R$, $f(x,y)=|x|$, it is easy to show that if $g:\R^2\to\R$ is convex and $|\{f\neq g\}|<\infty$, then $f=g$ everywhere. In general we have
\begin{theorem}[\cite{ACH,AH}]
\label{T1}
Let $U\subset\R^n$ be open and convex, and let $f:U\to\R$ be a convex function, such that $f\not\in C^{1,1}_{\rm loc}(U)$. Then, the following statements are equivalent:
\begin{enumerate}
\item For every $\varepsilon>0$ there exists a convex function $g\in C^{1,1}_{\rm loc}(U)$  such that
$$
|\{x\in U : f(x)\neq g(x)\}|<\varepsilon.
$$
\item The graph of $f$ does not contain any line isometric to $\bbbr$.
\end{enumerate}
Moreover, if the graph of $f$ contains no lines, we can find $g$ satisfying $g\geq f$.
\end{theorem}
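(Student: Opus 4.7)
My plan is to prove the two implications separately, with the bulk of the work in $(2)\Rightarrow(1)$.

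For $(1)\Rightarrow(2)$ I argue by contrapositive. If the graph of $f$ contains a line isometric to $\bbbr$, its projection to $\bbbr^n$ is an infinite line $\ell\subset U$ in some direction $v$; since $U$ is open convex and contains $\ell$, standard convex analysis (passing to $\overline{U}$) shows that $U$ is invariant under translation by $v$. Subtracting an affine function we may assume $f|_\ell\equiv 0$, and a convex-slab argument on the midlines of $\conv(\{x\}\cup\ell)$ in direction $v$---each contained in $U$, on each of which $f$ is convex and bounded above by endpoint values, hence constant---gives $f(x+tv)=f(x)$ for all $x\in U$ and $t\in\bbbr$. Now let $g\in C^{1,1}_{\loc}(U)$ be convex with $|\{f\neq g\}|<\eps$, and set $\phi(x):=g(x+v)-g(x)$; by convexity and $C^1$-smoothness of $g$, $\phi$ is continuous and nondecreasing along $v$-lines. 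On $S:=\{f=g\}\cap(\{f=g\}-v)$, of complement measure $<2\eps$, we have $\phi=0$. By Fubini, on almost every $v$-line the complement of $S$ has finite $1$-D measure; a monotone continuous function on $\bbbr$ vanishing off a finite-measure set must vanish identically, so $\phi\equiv 0$ on a.e.\ $v$-line, hence on $U$. Thus $g$ is $v$-invariant; slicing $U$ by a hyperplane $\Pi$ transverse to $v$, Fubini yields $f=g$ on a dense subset of $\Pi$, and continuity promotes this to $f=g$ on $\Pi$, hence on all of $U$ by $v$-invariance, contradicting $f\notin C^{1,1}_{\loc}(U)$.

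For $(2)\Rightarrow(1)$, the strategy is localized regularization that leaves $f$ unchanged on a large set. By Aleksandrov's theorem, $f$ is twice differentiable almost everywhere, so the sets $E_M=\{x\in U:\|\nabla^2 f(x)\|\leq M\}$ exhaust $U$ up to a null set as $M\to\infty$. Given $\eps>0$, fix $M$ large and a compact $K\subset E_M$ with $|U\setminus K|<\eps$; on $K$, $f$ is essentially $C^{1,1}$ in a quantitative sense. The plan is to extend $f|_K$ to a globally convex $C^{1,1}_{\loc}$ function $g$ on $U$ via a Moreau-type infimal convolution
\[
g(x)=\inf_{y\in U}\bigl(f(y)+\tfrac{M'}{2}|x-y|^2\bigr)
\]
with $M'>M$, combined with a Whitney-type decomposition of $U\setminus K$ and a gluing lemma. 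This candidate is automatically convex, of class $C^{1,1}_{\loc}$ with Hessian bound $M'$, and equal to $f$ on the part of $K$ where $f$ admits an $M'$-quadratic lower support.

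The no-line hypothesis enters in ensuring each local infimum is attained in a bounded neighborhood, so that $g=f$ on most of $K$. Were the graph of $f$ to contain a line, the infimum along that line would escape to infinity and drag $g$ below $f$ on an infinite-measure strip, precisely as in the obstruction argument above. The no-line condition provides compactness of the recession cone of the epigraph and hence a uniform modulus for attainment; translating this into quantitative estimates and carrying out the gluing is the main technical obstacle, and where I would expect to rely most heavily on the arguments in \cite{ACH,AH}. For the ``moreover'' clause, if the graph of $f$ contains no lines at all, $f^*$ is finite on an open coercive domain, so the dual construction $g=(f^*\,\square\,\tfrac{1}{2M}|\cdot|^2)^*$ yields a convex $C^{1,1}_{\loc}$ function with $g\geq f$ and close to $f$ in the Lusin sense; the no-lines hypothesis is precisely what makes this dual construction well-defined.
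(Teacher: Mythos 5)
First, note that the paper does not prove this theorem: it is quoted from \cite{ACH,AH}, so there is no internal proof to compare against, and I can only assess your argument on its own terms.

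Your proof of $(1)\Rightarrow(2)$ (by contrapositive) is essentially correct and complete: the translation-invariance of $U$ and of $f$ in the direction $v$ of the line, the monotonicity of $\phi(x)=g(x+v)-g(x)$ along $v$-lines, and the Fubini argument forcing $g=f$ everywhere are all sound, and this is the standard argument (it is the same mechanism as the $f(x,y)=|x|$ example in the introduction).

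The direction $(2)\Rightarrow(1)$, which is the substance of the theorem, has a genuine gap, and its central mechanism is wrong as stated. The Moreau envelope $g(x)=\inf_{y}\bigl(f(y)+\tfrac{M'}{2}|x-y|^2\bigr)$ of a convex $f$ satisfies $g\leq f$ with equality at $x$ if and only if $y=x$ minimizes the (strongly convex) objective, i.e.\ if and only if $0\in\partial f(x)$. So $g$ agrees with $f$ only at global minimizers of $f$ --- not on ``the part of $K$ where $f$ admits an $M'$-quadratic lower support'' --- and no choice of $M'$ fixes this. Consequently the attainment/recession-cone discussion of where the no-line hypothesis enters does not attach to any step that actually occurs in your construction. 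The correct constructions in \cite{AH,ACH} are genuinely different (roughly, one majorizes $f$ near the bad set by a convex $C^{1,1}_{\loc}$ function built from supporting paraboloids/tangent data on the good set and takes a convex envelope; the no-line hypothesis is what prevents this majorant from being forced to touch the graph along an entire line). Your own text concedes that ``carrying out the gluing is the main technical obstacle'' and defers it to the cited papers, which is precisely the part that needs proof. The ``moreover'' clause is also incorrect as written: by the standard conjugacy identities, $\bigl(f^*\,\text{inf-convolved with}\,\tfrac{1}{2M}|\cdot|^2\bigr)^*=f+\tfrac{M}{2}|\cdot|^2$, which is neither $C^{1,1}_{\loc}$ (unless $f$ already is) nor equal to $f$ on a large set.
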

Here $C^{1,1}_{\rm loc}$ stands for the class of functions with locally Lipschitz continuous gradient.

However, the Lusin approximation by convex $C^{1,1}_{\loc}$ functions is much easier than the Lusin approximation by convex $C^2$ functions and the approximation by $C^2$ convex functions has only been proved under an additional condition that the function $f$ is locally strongly convex.

Recall that a function $f: U \to \R$ is {\em strongly convex}, where $U \subseteq \R^n$ is open and convex, if there exists $\eta > 0$ such that $f(x) - \frac{\eta}{2}|x|^2$ is convex. In this case, we say that $f$ is {\em $\eta$-strongly convex}. Moreover, $f$ is {\em locally strongly convex} whenever for every $x \in U$ there exists $r > 0$ such that the restriction of $f$ to $B(x, r) \subset U$ is strongly convex. Note that locally strongly convex functions cannot contain a line on the graph and thence they can always be approximated in the Lusin sense by convex $C^{1,1}_{\rm loc}$ functions. However, we have a much stronger result in that case:

\begin{theorem}[\cite{ADH}]
\label{T2}
Let $U\subseteq\R^n$ be open and convex, and let $f:U\to\bbbr$ be locally strongly convex. Then for every $\eps_o>0$ and for every continuous function $\varepsilon:U\to (0, 1]$ there is a locally strongly convex function
$g\in C^2(U)$, such that
$$|\{x\in U:\, f(x)\neq g(x)\}|<\eps_o
\quad
\text{and}
\quad
|f(x)-g(x)|<\varepsilon(x) \text{ for all } x\in U.
$$
Also, if $f$ is $\eta$-strongly convex on $U$, then for every $\widetilde{\eta}\in (0, \eta)$ there exists such a function $g$ which is $\widetilde{\eta}$-strongly convex on $U$.
\end{theorem}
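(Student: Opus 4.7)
The plan is to combine Alexandrov's second-order differentiability theorem with a Whitney-type construction adapted to strong convexity. The crucial feature is that $f$, being $\eta$-strongly convex, comes with a \emph{curvature budget} of $\eta-\widetilde{\eta}>0$: any $C^2$ perturbation whose Hessian has operator norm at most $\eta-\widetilde{\eta}$ still leaves a $\widetilde{\eta}$-strongly convex function. This slack is what will permit a $C^2$ construction, which cannot be arranged in the absence of strong convexity.

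First I would reduce to the case where $f$ is globally $\eta$-strongly convex on a bounded convex set $U$; the passage from local to global strong convexity is done by a countable covering argument, applying the global statement on each ball of the cover with budget $\eps_o/2^k$ and splicing the pieces together (the local $g_k$'s already coincide with $f$ outside a set of small measure, which makes the splicing straightforward). By Alexandrov's theorem, $f$ then admits a pointwise second-order Taylor expansion at a.e.\ point with $D^2 f\ge\eta I$, and a Lusin/Egorov argument yields a closed $K\subset U$ with $|U\setminus K|<\eps_o$ on which $\nabla f$ and $D^2 f$ are continuous and the Taylor remainder is uniform with some modulus $\alpha$:
$$
\bigl|f(y)-f(x)-\nabla f(x)\cdot(y-x)-\tfrac{1}{2}(y-x)^T D^2 f(x)(y-x)\bigr|\le \alpha(|y-x|)\,|y-x|^2
$$
for all $x\in K$ and $y\in U$, with $\alpha(t)\to 0$ as $t\to 0^+$. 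By intersecting $K$ with a sufficiently fine net in $U$ one may also assume that every point of $U$ is within a prescribed distance of $K$.

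Next, perform a Whitney decomposition of $U\setminus K$ into cubes $Q_i$ with $\diam Q_i\sim\dist(Q_i,K)$, and on each $Q_i$ take a mollification $g_i=f*\phi_{t_i}$ at scale $t_i\sim\diam Q_i$. The key observation is that convolution with a symmetric non-negative kernel preserves $\eta$-strong convexity exactly: writing $f=h+\tfrac{\eta}{2}|\cdot|^2$ with $h$ convex, one has $h*\phi_t$ convex and $\tfrac{\eta}{2}|\cdot|^2*\phi_t=\tfrac{\eta}{2}|\cdot|^2+c_t$, so each $g_i$ is $C^\infty$ and $\eta$-strongly convex on a neighborhood of $Q_i$. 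Set
$$
g \;=\; \chi f + (1-\chi)\sum_i \psi_i\, g_i,
$$
where $\{\psi_i\}$ is a smooth partition of unity subordinate to slight enlargements of the $Q_i$, and $\chi$ is a smooth cutoff equal to $1$ on an inner shrinking of $K$ and supported in a slightly larger neighborhood of $K$.

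The main obstacle is verifying that $g\in C^2(U)$ and that $g$ is $\widetilde{\eta}$-strongly convex. The partition-of-unity sum introduces cross terms in $D^2 g$ of the form $(g_i-g_j)D^2\psi_\ell$ and $(\nabla g_i-\nabla g_j)\otimes\nabla\psi_\ell$, with $\psi_\ell$-derivatives scaling like $\ell_i^{-1}$ and $\ell_i^{-2}$. Because all base points lie in $K$ with uniform Taylor modulus $\alpha$, the differences $|g_i-g_j|$ on overlapping cubes are $O(\alpha(\ell_i)\ell_i^2)$ and $|\nabla g_i-\nabla g_j|$ is $O(\alpha(\ell_i)\ell_i)$, so the cross terms in $D^2 g$ are of order $\alpha(\ell_i)$. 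Choosing $K$ so that $\alpha$ is smaller than $\eta-\widetilde{\eta}$ at the relevant scales absorbs the gluing error into the curvature budget, giving $D^2 g\ge\widetilde{\eta}\,I$ on $U\setminus K$; on $K$ one has $g=f$, already $\eta$-strongly convex, and $C^2$ matching across $\partial K$ follows from the same uniform Taylor control. The continuous pointwise bound $|f-g|<\eps(x)$ is then arranged by modulating the Whitney scale and the choice of $K$ with $\eps$.
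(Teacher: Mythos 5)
This theorem is stated here only as a citation to \cite{ADH}; the present paper contains no proof of it, so there is nothing internal to compare your sketch against. Judged on its own terms, your outline identifies the right ingredients (Alexandrov, Egorov/Lusin, a Whitney-type scheme, the curvature budget $\eta-\widetilde{\eta}$), but it has two genuine gaps, and they sit exactly at the hard points of the theorem.

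First, the cross-term estimate in the partition-of-unity gluing fails for the local approximants you chose. Writing $f=h+\frac{\eta}{2}|\cdot|^2$ with $h$ convex, mollification at scale $t_i$ sends the quadratic part to $\frac{\eta}{2}|\cdot|^2+\frac{\eta}{2}m_2t_i^2$ with $m_2=\int|z|^2\phi(z)\,dz$. Hence on two overlapping Whitney cubes the difference $g_i-g_j$ contains the constant $\frac{\eta}{2}m_2(t_i^2-t_j^2)$, which is of order $\eta\,\ell_i^2$ --- not $\alpha(\ell_i)\ell_i^2$ --- whenever adjacent cubes carry different dyadic scales (and $t_i\sim\ell_i\sim\dist(Q_i,K)$ forces the scale to change by a fixed factor across some adjacent pairs). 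The identity $\sum_\ell D^2\psi_\ell=0$ lets you subtract one reference $g_{i_0}$ but does not cancel these constants, so after multiplying by $D^2\psi_\ell=O(\ell_i^{-2})$ the contribution to $D^2g$ is of order $\eta$: it consumes the entire curvature budget instead of the fraction $\alpha(\ell_i)$ you claim. This particular defect is repairable (take as local models the second-order Taylor polynomials of $f$ at nearest points of $K$, i.e., the genuine Whitney extension of the 2-jet, whose mutual discrepancies are controlled by the jet modulus), but as written the key inequality $D^2g\geq\widetilde{\eta}\,I$ is not established.

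Second, and more seriously, even after that repair the construction only gives $D^2g\geq\bigl(\eta-C\alpha(C\,d(x,K))\bigr)I$, so it works only where $d(x,K)$ is below the threshold at which $\alpha$ beats $\eta-\widetilde{\eta}$. Your one-line remedy --- making $K$ also a fine net --- is circular: enlarging $K$ to increase its density degrades the uniform modulus $\alpha$ obtained from Egorov, while the density threshold you need depends on $\alpha$; the measure bound $|U\setminus K|<\eps_o$ by itself only forces $K$ to be $c\,\eps_o^{1/n}$-dense, which need not be small enough. It is symptomatic that your argument never uses the convexity of $f$ away from $K$: a purely local Whitney/mollification scheme would equally ``prove'' the statement for any function carrying a measurable field of uniformly positive 2-jets on a large closed set, which is too much. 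Some global input is needed on the far region, e.g., the fact that for $\widetilde{\eta}<\eta$ and $x$ a point of differentiability the paraboloid $y\mapsto f(x)+\nabla f(x)\cdot(y-x)+\frac{\widetilde{\eta}}{2}|y-x|^2$ lies below $f$ on all of $U$ and touches it at $x$, so that regularized suprema of such paraboloids are globally $\widetilde{\eta}$-strongly convex; nothing of this sort appears in your sketch. Finally, the local-to-global ``splicing'' of the $g_k$ and the pointwise bound $|f-g|<\eps(x)$ are asserted rather than argued; the one-dimensional gluing in Section~\ref{uni} of this paper shows that even matching two Lusin approximants across an overlap requires a dedicated lemma, and in $\R^n$ this step is not straightforward.
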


The main difficulty in Theorems~\ref{T1} and~\ref{T2} stems from the fact that they are true in an arbitrary dimension. While the dimension $n=1$ is much easier to deal with, it seems it has not been fully investigated yet. The main result of the paper resolves the problem of Lusin approximation of convex functions when $n=1$.

\begin{theorem}
\label{T3}
Let $f:(a,b)\to\bbbr$ be convex, $-\infty\leq a<b\leq\infty$. Then for every $\eps_o>0$ and for every continuous function $\eps:(a,b)\to (0,\infty)$ there is a convex function $g\in C^2(a,b)$ such that
$$
|\{x\in (a,b):\, f(x)\neq g(x)\}|<\eps_o
\quad
\text{and}
\quad
|f(x)-g(x)|<\eps(x) \text{ for all } x\in (a,b).
$$
\end{theorem}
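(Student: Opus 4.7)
The plan is to set $g=f$ on a closed set $F\subset(a,b)$ of almost full measure where $f$ is second-order regular, and to interpolate with a convex $C^2$ function on each complementary open gap, matching $f$, $\varphi$, and $\varphi'$ at the gap endpoints, where $\varphi=f'_+$ is the right derivative. Being nondecreasing and right-continuous, $\varphi$ has distributional derivative $d\varphi$ a nonnegative Radon measure with Lebesgue decomposition $\varphi'\,dx+\mu_s$. At a.e.\ $x$, $\mu_s$ has vanishing density, $\varphi$ is classically differentiable, and $f$ admits the Taylor expansion $f(x+s)=f(x)+\varphi(x)s+\tfrac12\varphi'(x)s^2+o(s^2)$. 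Using Lusin's theorem applied to the measurable function $\varphi'$, Lebesgue's density theorem, and the countability of the corner set, I would select a closed $F$ with $|(a,b)\setminus F|<\eps_o$ avoiding all corners, on which $\varphi|_F$ and $\varphi'|_F$ are continuous and the Whitney $C^2$-type matching conditions
$$
\varphi(y)=\varphi(x)+\varphi'(x)(y-x)+o(y-x),\quad f(y)-f(x)-\varphi(x)(y-x)-\tfrac12\varphi'(x)(y-x)^2=o((y-x)^2)
$$
hold uniformly for $x,y\in F$ as $y\to x$.

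\textbf{Interpolation on each gap.} Write $(a,b)\setminus F=\bigsqcup_i(\alpha_i,\beta_i)$; any unbounded end components are treated by first exhausting $(a,b)$ by compact subintervals and summing contributions. On each bounded gap the six boundary values $(f(\alpha_i),\varphi(\alpha_i),\varphi'(\alpha_i))$ and $(f(\beta_i),\varphi(\beta_i),\varphi'(\beta_i))$ are determined by $f$, and the task is to construct a convex $C^2$ function $g_i:[\alpha_i,\beta_i]\to\R$ matching them. Setting $h:=g_i''\geq 0$, this reduces to producing a nonnegative continuous $h$ with
$$
h(\alpha_i)=\varphi'(\alpha_i),\ h(\beta_i)=\varphi'(\beta_i),\ \int_{\alpha_i}^{\beta_i}h=\varphi(\beta_i)-\varphi(\alpha_i),\ \int_{\alpha_i}^{\beta_i}(\beta_i-y)h(y)\,dy=f(\beta_i)-f(\alpha_i)-\varphi(\alpha_i)(\beta_i-\alpha_i).
$$
Both integral right-hand sides are nonnegative by convexity of $f$ and lie strictly inside the admissible range $\bigl(0,(\beta_i-\alpha_i)(\varphi(\beta_i)-\varphi(\alpha_i))\bigr)$ unless $f$ is affine on the gap (a trivial case in which one takes $g_i=f$). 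Such an $h$ can be built as an affine baseline (hitting the two prescribed endpoint values) plus a two-parameter family of nonnegative interior bumps whose amplitudes solve a $2\times 2$ linear system for the two moment constraints. To enforce the pointwise bound $|g_i-f|<\eps(x)$, one adjoins additional Whitney-compatible points to $F$ to shorten each gap as required; this costs only negligibly in measure since $f$ and $g_i$ share boundary data and are both trapped between their common endpoint tangent lines and chord.

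\textbf{Gluing and the obstacle.} Setting $g=f$ on $F$ and $g=g_i$ on each gap produces a function with $g''\geq 0$ everywhere. The Whitney conditions on $F$ together with the sixfold matching at each $\alpha_i,\beta_i$ promote $g$ to a member of $C^2(a,b)$, which is convex because $g''\geq 0$. By construction $\{f\neq g\}\subset(a,b)\setminus F$ has measure less than $\eps_o$ and $|f-g|<\eps$ pointwise. The hard part will be the interpolation lemma invoked in the second paragraph: although the constraints on $h$ are abstractly compatible, since they arise from the genuinely convex function $f$ on $[\alpha_i,\beta_i]$, producing an explicit nonnegative continuous $h$ with the correct endpoint values and both moments is delicate in degenerate regimes — most notably when $\varphi'$ vanishes at one or both endpoints, or when $f$ is affine on subintervals adjacent to or inside the gap. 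This one-dimensional interpolation lemma is the essential technical content specific to Theorem~\ref{T3}, and replaces the local strong convexity hypothesis required in higher dimensions in Theorem~\ref{T2}.
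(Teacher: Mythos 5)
Your overall architecture (keep $f$ on a large closed set of second-order regular points, interpolate on each complementary gap by a convex $C^2$ function matching the 2-jet of $f$ at both endpoints, glue) is the same as the paper's, but the proposal has a genuine gap at exactly the step you assert in passing: that the jet-matching at the gap endpoints ``promotes $g$ to a member of $C^2(a,b)$.'' Matching $g_i,g_i',g_i''$ with the data of $f$ at $\alpha_i,\beta_i$ gives $C^2$ across each individual gap, but at a point $x_0\in F$ which is an accumulation point of infinitely many gaps you additionally need $\sup_{[\alpha_i,\beta_i]}|g_i''-\varphi'(x_0)|\to 0$ along the gaps converging to $x_0$. Your ``affine baseline plus a two-parameter family of bumps'' gives no control on $\max h_i$: the two moment constraints can force the mass $P_i=\varphi(\beta_i)-\varphi(\alpha_i)$ to sit very close to one endpoint (barycenter $\tau_i\ll \beta_i-\alpha_i$), in which case any nonnegative $h_i$ realizing both moments must have interior maximum at least of order $P_i/\tau_i$, which need not tend to $0$ even when $P_i\to 0$. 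Obtaining a construction with $\max h_i\le 4\eps_i$ for a quantity $\eps_i$ (a one-sided maximal average of $f''$ near the endpoints, \eqref{eq13}) that tends to $0$ is the actual technical heart of the proof (Lemma~\ref{T5}), and it only works because the good set is chosen not merely by Lusin/Whitney but also so that the averages $h_{r^\pm}$ converge \emph{uniformly} on it (Egorov, \eqref{eq2}) and so that $f''\in\{0\}\cup[\delta,\infty)$ there; the latter reduces the infinitely-many-gaps problem to gaps with vanishing endpoint second derivative, all other problematic gaps being finite in number. None of this appears in your selection of $F$.

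Two further points. First, your dismissal of the affine-gap case as ``trivial: take $g_i=f$'' is wrong as stated: if $f$ is affine on $[\alpha_i,\beta_i]$ but $\varphi'(\alpha_i)>0$, then no convex $C^2$ interpolant with the prescribed six boundary data exists ($h\ge 0$ with $\int h=0$ forces $h\equiv 0$, contradicting $h(\alpha_i)>0$), and taking $g_i=f$ destroys $C^2$ at $\alpha_i$; this is precisely the obstruction illustrated by the example in the introduction. (The configuration is excluded when $\alpha_i$ is a genuine differentiability point of $\varphi$, but the near-degenerate versions are what force the quantitative estimate above.) Second, you explicitly defer the interpolation lemma itself, acknowledging it as ``the essential technical content''; since that lemma, together with the uniform second-derivative bound, is the entire substance of the theorem beyond soft measure theory, the proposal as written is an outline of the strategy rather than a proof.
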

The uniform approximation described in the theorem allows one to have increasing accuracy when approaching the boundary. In particular, if $f$ is continuous on $[a,b]$, we can find $g\in C^2(a,b)\cap C([a,b])$ with $g(a)=f(a)$ and $g(b)=f(b)$.

The problem whether every convex function in dimension $n\geq 2$, whose graph does not contain a line, can be approximated in the Lusin sense by $C^2$ convex functions is open. However, we conjecture:

\begin{conjecture}
If $n\geq 2$, then there is a convex function $f:B^n(0,1)\to\R$ and $m>0$, such that if $g\in C^2(B^n(0,1))$ is convex, then
$|\{f\neq g\}|\geq m$.
\end{conjecture}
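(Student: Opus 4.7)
My strategy would be to construct an explicit convex function $f \colon B^n(0,1) \to \R$ in dimension $n = 2$ (the higher-dimensional case follows by forming products with linear functions on the extra variables) and derive the required uniform lower bound from a rigidity statement. The starting point is the following rigidity lemma, already implicit in the proof of Theorem~\ref{T1}: if $g \in C^2(B^n(0,1))$ is convex and $g = f$ on a measurable set $A$, then at every Lebesgue density point $x_0 \in A$ the inclusion $\partial g(x_0) \subseteq \partial f(x_0)$ forces $\nabla g(x_0) \in \partial f(x_0)$; moreover, at almost every density point at which $f$ is twice differentiable in the Alexandrov sense, $D^2 g(x_0)$ equals the Alexandrov Hessian of $f$ at $x_0$. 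Consequently, whenever the Alexandrov Hessian of $f$ is rank-deficient on a positive-measure subset of $A$, the same is true of $D^2 g$, so $\det D^2 g = 0$ there, and the absolutely continuous Monge--Amp\`ere measure $\mu_g = \det(D^2 g)\,dx$ must be concentrated on $\{f \neq g\}$.

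The heart of the plan is to choose $f$ so as to combine two incompatible demands on an approximating $g$: (i) the continuous map $\nabla g$ must select a single point from the set-valued map $\partial f$ in a $C^1$ manner on the density set of $\{f = g\}$, and (ii) the Monge--Amp\`ere mass that $\mu_g$ must carry on $\{f \neq g\}$ is bounded below by a fixed positive constant, while $\|D^2 g\|$ on this set is at the same time constrained by the continuous matching of $D^2 g$ with $D^2 f$ across the boundary of $\{f = g\}$. One should then exhibit an $f$ for which these demands cannot be reconciled unless $|\{f \neq g\}| \geq m$ for some absolute $m > 0$. Candidate choices include: the support function of a strictly convex body with nowhere $C^2$ boundary, so that the rank-one null directions of $D^2 f$ rotate in a controlled but irregular manner; a superposition of cone singularities $\sum c_i |x - x_i|$ with centers $x_i$ accumulating on a positive-measure set; or a construction with a Cantor-type singular set for $\nabla f$. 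In each case one would combine the above rigidity with Alexandrov--Monge--Amp\`ere mass estimates to convert local incompatibilities into a global lower bound on $|\{f \neq g\}|$.

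The main obstacle is that a single cone-type singularity, such as $f(x,y) = \sqrt{x^2+y^2}$, \emph{does} admit a $C^2$ convex Lusin approximation: one may replace the cone inside a disk of radius $r$ around its apex by a radial polynomial convex extension that matches the second-order data at the boundary circle (e.g.\ a quartic perturbation of the linear profile), at the cost of only $\pi r^2$ in excess measure, which tends to $0$ as $r \to 0$. Hence the mere presence of isolated cone-type singularities is insufficient, and the conjectured obstruction must come from the \emph{joint} geometry of many interacting singularities which cannot all be simultaneously smoothed with small total measure loss while preserving convexity and $C^2$ regularity. Formalising this interference and converting it into a uniform $m > 0$ — rather than merely $|\{f \neq g\}| > 0$ for each individual $g$ — is precisely the step where the difficulty lies, and it is for this reason that the conjecture remains open. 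An alternative avenue worth pursuing is a non-constructive Baire-category or compactness argument on a suitable space of convex functions, showing that the set of $f$ admitting a $C^2$ Lusin approximation is meager or first-category.
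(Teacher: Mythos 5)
The statement you were asked about is labelled a \emph{conjecture} in the paper: the authors explicitly state that the problem is open and offer no proof, so there is no argument in the paper to compare yours against. What you have written is, by your own admission in the final paragraph, not a proof but a research programme, and as it stands it does not establish the statement. The concrete gaps are: (i) you never fix a specific function $f$ nor produce a specific constant $m>0$; you list three candidate constructions (support functions of irregular bodies, superpositions of cones, Cantor-type singular sets) without carrying out any of them. (ii) The step from the (correct, and standard) rigidity fact --- that $D^2g$ must agree with the Alexandrov Hessian of $f$ at a.e.\ density point of $\{f=g\}$ --- to a \emph{uniform} lower bound on $|\{f\neq g\}|$ is exactly the missing content. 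In particular, the Monge--Amp\`ere argument as phrased proves nothing: there is no conservation law forcing $\mu_g=\det(D^2g)\,dx$ to carry a prescribed amount of mass on $\{f\neq g\}$, since $g$ is unconstrained there except by convexity and by $C^2$ matching along the (a priori very wild) boundary of $\{f=g\}$. (iii) The conjecture requires an $m$ independent of $g$; showing $|\{f\neq g\}|>0$ for each individual $g$ is strictly weaker and would not even preclude Lusin approximability, since the infimum over $g$ could still be zero. A Baire-category argument for meagerness of the approximable class would likewise have to be supplemented by this uniformity to yield the stated conclusion.

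None of this is a criticism of the ideas, several of which (the density-point rigidity, the observation that isolated cone singularities are harmlessly smoothable, the need for ``interacting'' singularities) are sensible starting points consistent with what is known from \cite{ACH,AH,ADH}. But a reviewer must record that the statement remains unproved both in the paper and in your proposal; the decisive step --- converting the local second-order rigidity into a quantitative, $g$-independent measure bound for an explicitly constructed $f$ --- is absent.
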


Let us explain now the main idea of the proof of Theorem~\ref{T3} along with the main difficulty. Given a convex function $f:(a,b)\to\bbbr$, we want to find a convex function $g\in C^2(a,b)$, such that $|\{f\neq g\}|<\eps$. Theorem~\ref{T1} allows us to assume that $f\in C^{1,1}_{\rm loc}$ and it is well known that a $C^{1,1}_{\rm loc}$ function coincides with a $C^2$ function outside a set of arbitrarily small measure. Thus, we can find $u\in C^2(a,b)$ such that $u=f$ on a closed set $E$ with $|(a,b)\setminus E|<\eps$. However, the function $u$ need not be convex and we want to correct it outside $E$ to make it $C^2$ and convex. The set $(a,b)\setminus E$ is the union of countably many intervals $(a_i,b_i)$. If $u$ is convex on $[a_i,b_i]$, we do not change it on $[a_i,b_i]$. If $u$ is not convex on $[a_i,b_i]$, we want to replace it with a convex $C^2$ function on $[a_i,b_i]$ while keeping values of $u$, $u'$ and $u''$ at the endpoints.

However, it is not always possible. If for example,
$$
f(x)=
\begin{cases}
x^2     & \text{if } x\leq 0,\\
0       & \text{if } 0\leq x\leq 1,\\
(x-1)^2 & \text{if } x\geq 1,
\end{cases}
$$
then $f$ is convex, of class $C^{1,1}$, but not of class $C^2$. Clearly, there is $u\in C^2(\bbbr)$, such that
$u=f$ on $\bbbr\setminus (0,1)$, but there is no convex $C^2(\bbbr)$ function that coincides with $f$ on $\bbbr\setminus (0,1)$. Indeed, it is easy to see that the only convex function that coincides with $f$ on $\bbbr\setminus (0,1)$ is $f$ itself. This is because the value of $f(1)$ lies on the tangent line at $x=0$.

Therefore, when we select the closed set $E\subset (a,b)$ where $u=f$, we have to make sure that the intervals $(a_i,b_i)$ have the property that the value of $f(b_i)$ lines above the tangent line at $a_i$ and that we have good estimates on how far $f(b_i)$ is from the tangent line at $a_i$, so that when we replace $u$ on infinitely many intervals $(a_i,b_i)$ with explicitly constructed convex functions of class $C^2([a_i,b_i])$, the resulting function will be of class $C^2(a,b)$.

\subsection*{Acknowledgements}
We are deeply grateful to Fedor Nazarov for generously sharing an idea that proved essential in completing our proof.

Piotr Hajłasz appreciates the hospitality of the University of Warsaw, where part of this work was conducted. His stay in Poland received funding from the
University of Warsaw via the IDUB project (Excellence Initiative Research University) as
part of the Thematic Research Programme \emph{Geometric Analysis: Methods and Applications GAMA25}. 

\section{Proof of Theorem~\ref{T3}}
\subsection{Proof of a special case}
In this section we prove a weaker result (Lemma~\ref{T9}). The general case is treated in Section~\ref{uni}.
\begin{lemma}
\label{T9}
Let $f:(a,b)\to\bbbr$ be convex, $a,b\in\bbbr$. Then for every $\eps>0$  there is a convex function $g\in C^2(a,b)$ such that
$
|\{x\in (a,b):\, f(x)\neq g(x)\}|<\eps.
$
\end{lemma}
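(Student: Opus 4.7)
We follow the roadmap sketched in the introduction. Since $(a,b)$ is bounded, the graph of $f$ has bounded horizontal extent and therefore contains no isometric copy of $\R$, so Theorem~\ref{T1} provides a convex $f_1\in C^{1,1}_{\loc}(a,b)$ with $|\{f\neq f_1\}|<\eps/2$. Replacing $f$ by $f_1$, we may therefore assume $f\in C^{1,1}_{\loc}(a,b)$. Then $f'$ is locally Lipschitz, $f''$ exists almost everywhere, and $f''\ge 0$ is locally bounded. Applying Lusin's theorem to $f''$ together with Whitney's $C^2$ extension theorem yields a closed set $E_0\subset(a,b)$ with $|(a,b)\setminus E_0|<\eps/4$ and a function $u\in C^2(a,b)$ such that $u=f$, $u'=f'$, $u''=f''$ on $E_0$, with $f''|_{E_0}$ continuous.

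The complement $(a,b)\setminus E_0$ decomposes into countably many open intervals $(a_i,b_i)$. On those for which $u|_{[a_i,b_i]}$ is already convex---in particular those sitting inside a maximal affine piece of $f$---we keep $u$. On the remaining ``bad'' intervals we must replace $u|_{[a_i,b_i]}$ by a convex $g_i\in C^2([a_i,b_i])$ matching the Hermite data $(f(a_i),f'(a_i),f''(a_i))$ and $(f(b_i),f'(b_i),f''(b_i))$. Setting $\varphi_i=g_i''$, the construction reduces to producing a continuous $\varphi_i\ge 0$ on $[a_i,b_i]$ with the prescribed endpoint values and satisfying the two moment equations
\begin{equation*}
\int_{a_i}^{b_i}\varphi_i(t)\,dt = f'(b_i)-f'(a_i),\qquad \int_{a_i}^{b_i}(b_i-t)\varphi_i(t)\,dt = f(b_i)-f(a_i)-f'(a_i)(b_i-a_i),
\end{equation*}
after which $g_i$ is recovered by double integration. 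A suitable $\varphi_i$ can be built as a piecewise-polynomial bump perturbing the linear interpolant of the two boundary values of $f''$, provided the right-hand sides of the moment equations admit enough positive slack relative to $(b_i-a_i)^2 f''(a_i)$ and $(b_i-a_i)^2 f''(b_i)$.

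The main obstacle, foreshadowed by the tangent-line example in the introduction, is that this slack is not automatic: if the tangent of $f$ at $a_i$ happens to pass through $\bigl(b_i,f(b_i)\bigr)$ the second moment equation has a zero right-hand side and no admissible $\varphi_i$ exists. This forces us to shrink $E_0$ to a smaller closed set $E$ \emph{before} freezing the gap structure, discarding a set of arbitrarily small measure to clear out such collinear-tangent configurations, while carefully preserving intact each maximal interval of affinity of $f$ (on which $u=f$ is already $C^2$ and convex, so no interpolation is needed). Once $E$ has been chosen so that every bad gap enjoys the quantitative slack $f(b_i)-f(a_i)-f'(a_i)(b_i-a_i)\gtrsim (b_i-a_i)^2\max\{f''(a_i),f''(b_i)\}$ together with its symmetric twin at $b_i$, the functions $g_i$ can be constructed uniformly. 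Gluing $u$ on $E$ with the $g_i$ on the bad gaps yields $g\in C^2(a,b)$; continuity of $g''$ at accumulation points of $\{a_i,b_i\}$ follows from continuity of $f''|_E$ and uniform $L^\infty$ bounds on the $\varphi_i$ in terms of their boundary data, convexity is immediate from $g''\ge 0$ everywhere, and $|\{f\neq g\}|\le|(a,b)\setminus E|+\eps/2<\eps$. The delicate part is the construction of $E$ and the explicit moment-based construction of the $\varphi_i$ with quantitative control uniform across the countably many gaps.
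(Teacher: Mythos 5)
Your overall strategy matches the paper's: reduce to $f\in C^{1,1}_{\loc}$ via Theorem~\ref{T1}, use Lusin--Whitney to get $u\in C^2$ agreeing with $f,f',f''$ on a closed set $E$, and repair each gap $(a_i,b_i)$ by solving a two-moment problem for the nonnegative second derivative $\varphi_i=g_i''$. However, there is a genuine gap at the one point where the proof is actually delicate. Your proposed ``quantitative slack'' condition $f(b_i)-f(a_i)-f'(a_i)(b_i-a_i)\gtrsim (b_i-a_i)^2\max\{f''(a_i),f''(b_i)\}$ is vacuous precisely in the critical case $f''(a_i)=f''(b_i)=0$, and all but finitely many of the problematic gaps are of this type (the paper shows the gaps with a positive endpoint second derivative that need repair are finite in number, via a preliminary step separating $\{h=0\}$ from $\{h\ge\delta\}$ --- a step your proposal also omits). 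Worse, your justification for $C^2$-regularity of the glued function at accumulation points --- ``uniform $L^\infty$ bounds on the $\varphi_i$ in terms of their boundary data'' --- is false in that critical case: there the boundary data of $\varphi_i$ are zero, yet $\varphi_i$ must carry the positive mass $\int_{a_i}^{b_i}f''\,dt=f'(b_i)-f'(a_i)$ with a prescribed first moment, and if that mass sits close to an endpoint the infimum of $\sup\varphi_i$ over admissible $\varphi_i$ is comparable to $P_i/\min(\tau_i,c_i-\tau_i)$, which boundary data alone cannot control.

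What is actually needed, and what the paper supplies, is a bound of the form $\sup\varphi_i\le 4\eps_i$ where $\eps_i=\max\bigl\{\sup_{x}\frac1x\int_{a_i}^{a_i+x}h,\ \sup_x\frac1x\int_{b_i-x}^{b_i}h\bigr\}$, together with the fact that $\eps_i\to 0$ as the gaps shrink. The latter does \emph{not} follow from pointwise Lebesgue differentiation at each $a_i$, since the points vary with $i$; it requires an Egorov-type selection of $E$ on which the averaged second derivatives $h_{r^\pm}$ converge to $h$ \emph{uniformly}, applied at endpoints where $h=0$. This uniform-averages step is the heart of the proof and is entirely absent from your outline; without it the claim that $g''$ is continuous at accumulation points of the gaps does not go through. (By contrast, your worry about ``collinear-tangent configurations'' largely resolves itself: the tangent at $a_i$ passes through $(b_i,f(b_i))$ iff $h=0$ a.e.\ on the gap, in which case $f$ is affine there, the two-sided second derivatives at the endpoints are forced to vanish, and the affine interpolant works.)
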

Note that we assume here that the interval $(a,b)$ is bounded, while in Theorem~\ref{T3} we have any open interval.
\begin{proof}
According to Theorem~\ref{T1}, we can assume that $f\in C^{1,1}_{\rm loc}(a,b)$. Clearly, $f$ is twice differentiable a.e.

Fix small $\eps>0$.
It follows from Whitney's theorem \cite[Theorem~4]{whitney} that  $f\in C^{1,1}_{\rm loc}(a,b)$ coincides with a $C^2$ function $u\in C^2(a,b)$ outside a set of measure less that $\eps/3$. However, the result does not guarantee convexity of $u$.

Let $E_1\subset (a,b)$, $|(a,b)\setminus E_1|<\eps/3$, be a compact set such that $u=f$, $u'=f'$, $u''=f''$ on $E_1$.

Let $h:=f''$. According to the Lebesgue differentiation theorem, almost every $x\in (a,b)$ is a Lebesgue point of $h$ and at such  a point we have (see \cite[p.\ 11]{stein})
\begin{equation}
\label{eq3}
h_{r^{\pm}}(x)\stackrel{r\to 0}{\longrightarrow} h(x),
\quad
\text{where}
\quad
h_{r^+}(x):=\frac{1}{r}\int_{x}^{x+r} h(y)\, dy,
\quad
h_{r^-}(x):=\frac{1}{r}\int_{x-r}^{x} h(y)\, dy.
\end{equation}
Egorov's and Lusin's theorems yield a compact set $E_2\subset (a,b)$, $|(a,b)\setminus E_2|<\eps/3$, such that $h|_{E_2}$ is continuous and
\begin{equation}
\label{eq1}
h_{1/i^\pm}\rightrightarrows h
\quad
\text{uniformly on}
\quad
E_2 \text{ as } i\to\infty
\end{equation}
This implies that
\begin{equation}
\label{eq2}
h_{r^\pm}\rightrightarrows h
\quad
\text{uniformly on}
\quad
E_2 \text{ as } r\to 0.
\end{equation}
Indeed, $h$ is bounded in an open neighborhood of $E_2$, say $0\leq h\leq M$. For each $r\in (0,1)$, let $i\in \bbbn$ be such that $1/i\leq r<1/(i-1)$; it suffices to observe that for sufficiently small $r$,
$$
|h_{r^\pm}-h_{1/i^\pm}|\leq 2M/i\to 0
\quad
\text{as } i\to\infty.
$$
This and \eqref{eq1} imply \eqref{eq2}.

Fix $\delta>0$ such that
$
|\{x\in (a,b):\, h(x)\in (0,\delta)\}|<\eps/3,
$
and let
$$
E_3\subset \{x\in (a,b):\, h(x)\not\in (0,\delta)\},
\quad
|(a,b)\setminus E_3|<\eps/3,
$$
be a compact set. Clearly, $h(x)=0$ or $h(x)\geq \delta$ for all $x\in E_3$. Finally define
$$
E:=E_1\cap E_2\cap E_3,
\quad
\text{so}
\quad
|(a,b)\setminus E|<\eps.
$$
By removing countably many points from $E$, we can assume that there are no isolated points in $E$.

The function $u\in C^2(a,b)$ coincides with $f$ on $E$ and hence it approximates $f$ in the Lusin sense, i.e. $|\{u\neq f\}|<\eps$. However, $u$ is not necessarily convex. We will
construct a convex function $g\in C^2(a,b)$ such that $f=g$ on $E$ by applying an infinite  sequence of modifications to the function $u$ while keeping its values on the set $E$ unchanged.

The set $(a,b)\setminus E=\bigcup_i (a_i,b_i)$ is the union of countably many open intervals.
Since the set $E$ has no isolated points, any two intervals $(a_i,b_i)$ and $(a_j,b_j)$, $i\neq j$, have positive distance one from another.

Consider first the endpoint intervals $(a_i,b_i)=(a,b_i)$ and $(a_i,b_i)=(a_i,b)$.
Denote the endpoint intervals by $(a,\alpha):=(a,b_i)$ and $(\beta,b):=(a_i,b)$, so $E\subset I:=[\alpha,\beta]$.

We replace $u$ in $(a_i,b_i]=(a,\alpha]$ with any convex function $u_i\in C^2((a,\alpha])$ such that $u_i(\alpha)=u(\alpha)$, $u_i'(\alpha)=u'(\alpha)$, and $u_i''(\alpha)=u''(\alpha)$. We also replace $u$ on $[a_i,b_i)=[\beta,b)$ with $u_i\in C^2([\beta,b))$ in a similar way.
The resulting function will still be denoted by $u$.

Assume now that an interval $(a_i,b_i)$ is not an endpoint interval, i.e., $a_i>a$ and $b_i<b$.
Clearly, $[a_i,b_i]\subset I=[\alpha,\beta]$.
If
\begin{equation}
\label{eq5}
h(a_i)>0
\qquad
\text{or}
\qquad
h(b_i)>0,
\end{equation}
then there is a convex function $u_i\in C^2([a_i,b_i])$ such that
\begin{equation}
\label{eq9}
\begin{aligned}
&u_i(a_i)=u(a_i),\ u_i'(a_i)=u'(a_i),\ u_i''(a_i)=u''(a_i),\\
&u_i(b_i)=u(b_i),\ u_i'(b_i)=u'(b_i),\ u_i''(b_i)=u''(b_i).
\end{aligned}
\end{equation}
Indeed, if for example $h(a_i)>0$, then $f''=h>0$ on a subset of $(a_i,b_i)$ of positive measure, because \eqref{eq3} is satisfied at $x=a_i$. Therefore,
$$
u(b_i)=f(b_i)>f(a_i)+f'(a_i)(b_i-a_i)=u(a_i)+u'(a_i)(b_i-a_i),
$$
i.e., the value of $u(b_i)$ is above the tangent line at $a_i$ and this allows us to define $u_i$ explicitly by prescribing (piecewise linear) $u_i''\geq 0$ on $[a_i,b_i]$. For more details, see the discussion surrounding formula \eqref{eq11}.

We fix such a function $u_i$ for each interval $[a_i,b_i]$ satisfying \eqref{eq5}.

First, consider intervals $(a_i,b_i)$, such that
\begin{equation}
\label{eq14}
h(a_i)>0
\quad
\text{and}
\quad
h(b_i)>0.
\end{equation}
If $u$ is convex on $[a_i,b_i]$, we do not alter $u$ on $[a_i,b_i]$.  Thus, assume that
\begin{equation}
\label{eq8}
h(a_i)>0,
\quad
h(b_i)>0,
\quad
u \text{ is not convex on } [a_i,b_i].
\end{equation}

Uniform continuity of $u''$ on $I=[\alpha,\beta]$ implies that $I$ contains at most finitely many intervals $[a_i,b_i]$ satisfying \eqref{eq8} (they are pairwise disjoint, because $E$ has no isolated points). Indeed, there is $\tau>0$, such that
$$
|x-y|<\tau,\,\,\,\, x,y\in I
\quad
\Longrightarrow
\quad
|u''(x)-u''(y)|<\delta.
$$
If $u$ is not convex on $[a_i,b_i]$, then there is $x\in (a_i,b_i)$ with $u''(x)<0$. Hence
$$
|u''(a_i)-u''(x)|>u''(a_i)=h(a_i)\geq\delta
\quad
\text{implies that}
\quad
|a_i-b_i|>|a_i-x|\geq\tau.
$$
Clearly, we can have only a finite number of pairwise disjoint intervals of length at least $\tau$ inside $I$.
On intervals satisfying \eqref{eq8} we redefine $u$ as follows:
$$
u(x):=u_i(x)
\quad
\text{if } x\in [a_i,b_i]
\text{ and } \eqref{eq8} \text{ is satisfied.}
$$
This completes the redefinition of $u$ on all intervals satisfying \eqref{eq14}.
Note that although we redefined the function $u$ on intervals $[a_i,b_i]$, we still denote the new function by $u$.

Now, let us consider intervals satisfying
\begin{equation}
\label{eq6}
\big(h(a_i)=0 \text{ and } h(b_i)>0\big)
\qquad
\text{or}
\qquad
\big(h(a_i)>0 \text{ and } h(b_i)=0\big).
\end{equation}

Again, the interval $I=[\alpha,\beta]$ contains at most finitely many intervals $[a_i,b_i]\subset I$ satisfying \eqref{eq6}.
Indeed, since $h$ is uniformly continuous on $E$, there is a constant $\tau>0$, such that
$$
|x-y|<\tau,\ x,y\in E
\quad
\Longrightarrow
\quad
|h(x)-h(y)|<\delta.
$$
Since \eqref{eq6} implies that $|h(a_i)-h(b_i)|\geq \delta$, it follows that $|a_i-b_i|\geq\tau$,
and we can only have a finite number of such intervals in $I$.
As in the previous case, on such intervals we replace $u$ with
\begin{equation}
\label{eq7}
u(x):=u_i(x)
\quad
\text{if } x\in [a_i,b_i] \text{ and } \eqref{eq6} \text{ is satisfied.}
\end{equation}

We corrected $u$ on all intervals $[a_i,b_i]$ satisfying \eqref{eq14} or \eqref{eq6}, that is, on all intervals satisfying \eqref{eq5}, on which $u$ was not convex already. Since we altered $u$ on a finite family of pairwise disjoint intervals while keeping the values of $u$, $u'$ and $u''$ at the endpoints, the resulting new function (still denoted by $u$) is of class $C^2(a,b)$ and
\begin{equation}
\label{eq20}
u''(x)\geq 0 \text{ whenever } x\not\in (a_i,b_i)
\text{ for some }  i \text{ such that } h(a_i)=h(b_i)=0.
\end{equation}
The remaining intervals on which we need to modify the function $u$ are such that
\begin{equation}
\label{eq10}
h(a_i)=h(b_i)=0.
\end{equation}
Denote by $\mathcal{I}$ the set of indices $i$ satisfying \eqref{eq10}.

We will replace $u$ on the intervals $[a_i,b_i]$, $i\in\mathcal{I}$, with suitable $C^2$ convex functions ${u_i\in C^2([a_i,b_i])}$ satisfying \eqref{eq9}.
In the previous modifications of $u$, we altered it on a finite family of pairwise disjoint intervals clearly preserving the $C^2$ regularity of $u$. However, this time the family of intervals $[a_i,b_i]$, $i\in\mathcal{I}$, may be infinite and that the resulting modified function is still of class $C^2(a,b)$ will require some work.

Since $u$, $u'$ and $u''$ agree with $f$, $f'$ and $f''$ at the points $a_i$ and $b_i$, we are looking for a $C^2$ convex function $u_i$ such that $u_i$, $u'_i$ and $u''_i$ agree with $f$, $f'$ and $f''$ at the points $a_i$ and $b_i$ (in particular, $u_i''(a_i)=u_i''(b_i)=0)$.

The simplest case is when $h=0$ at almost every point of $(a_i,b_i)$. Then the function $f$ is linear on $[a_i,b_i]$ and hence $u'(a_i)=u'(b_i)$. In that case we define $u_i$ to be linear on $[a_i,b_i]$.

Therefore we may assume that $\int_{a_i}^{b_i}h(t)\, dt>0$. Upon translation, we can assume that $[a_i,b_i]=[0,c_i]$, where $c_i=b_i-a_i$. This will slightly simplify notation.

Since $f\in C^{1,1}([0,c_i])$ and $h=f''$, Taylor's formula with the integral remainder yields
$$
f'(c_i)=f'(0)+\int_0^{c_i} h(t)\, dt,
\quad
f(c_i)=f(0)+f'(0)c_i+\int_0^{c_i} h(t)(c_i-t)\, dt.
$$
Therefore, if $h_i\in C([0,c_i])$ satisfies
\begin{equation}
\label{eq11}
h_i(0)=h_i(c_i)=0,
\quad
\int_0^{c_i} h_i(t)\, dt=\int_0^{c_i} h(t)\, dt,
\quad
\int_0^{c_i} t h_i(t)\, dt=\int_0^{c_i} t h(t)\, dt,
\end{equation}
and $u_i$ is defined as the second anti-derivative of $h_i$ i.e,
\begin{equation}
\label{eq15}
u_i(x):=f(0)+f'(0)x+\int_0^x\int_0^\tau h_i(t)\, dt\, d\tau\stackrel{{\rm(Fubini)}}{=}
f(0)+f'(0)x+\int_0^x h_i(t)(x-t)\, dt,
\end{equation}
we will have that $u_i$, $u'_i$ and $u''_i$ agree with $f$, $f'$ and $f''$ at the points $0$ and $c_i$ (i.e. upon translation, at the points $a_i$ and $b_i$).
Thus we need to construct a function $h_i$ satisfying \eqref{eq11}, and we need to do it carefully, because we need to get good estimates for the resulting function $u_i$.

For $i\in\mathcal{I}$, let $P_i$ and $\tau_i$ be defined by
$$
P_i:=\int_0^{c_i} h(t)\, dt,
\quad
P_i\tau_i=\int_0^{c_i}th(t)\, dt.
$$
Clearly, $\tau_i\in (0,c_i)$.

If $\tau_i\in (0,c_i/2]$, we define $H_i:=P_i/\tau_i$ and $h_i:[0,c_i]\to [0,\infty)$ by
\begin{equation}
\label{eq16}
h_i(t):=
\begin{cases}
H_i\tau^{-1}_it          & \text{if } t\in [0,\tau_i],\\
-H_i\tau_i^{-1}(t-2\tau_i)     & \text{if } t\in [\tau_i,2\tau_i],\\
0                        & \text{if } t\in [2\tau_i,c_i].
\end{cases}
\end{equation}
If $\tau_i\in [c_i/2,c_i)$, we define $H_i:=P_i/(c_i-\tau_i)$ and $h_i:[0,c_i]\to [0,\infty)$ by
\begin{equation}
\label{eq17}
h_i(t):=
\begin{cases}
0                        & \text{if } t\in [0,2\tau_i-c_i],\\
H_i(c_i-\tau_i)^{-1}(t+c_i-2\tau_i)     & \text{if } t\in [2\tau_i-c_i,\tau_i],\\
-H_i(c_i-\tau_i)^{-1}(t-c_i)                        & \text{if } t\in [\tau_i,c_i].
\end{cases}
\end{equation}
It is helpful to sketch the graph of $h_i$.
In geometric terms, the graph of $h_i$ is an isosceles triangle of height $H_i$ and the area equal $P_i$. In the case when $\tau_i\in (0,c_i/2]$, the base  of the triangle touches zero and in the case when $\tau_i\in [c_i/2,c_i)$ the base touches $c_i$.
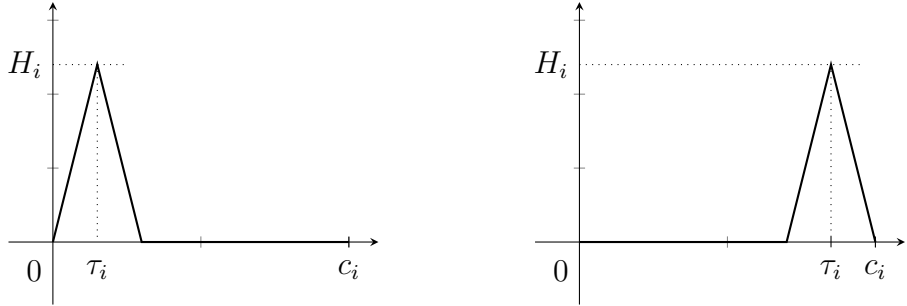
\begin{figure}[!h]
\begin{tikzpicture}
  \begin{axis}[
    domain=-0.6:5,
    samples=100,
    width=6.5cm,
    axis lines=center,
    axis equal,
    xticklabels=\empty,
yticklabels=\empty,
  ]
  \addplot[mark=none,mark size=0.5pt, black] coordinates {(2.2,0)};
  \addplot[mark=none,mark size=0.5pt, black] coordinates {(-0.3,0)};
    \addplot[thick] coordinates {(0,0) (0.3,1.2) (0.6,0) (2,0)};
    \addplot[dotted] coordinates {(0,1.2) (0.5,1.2)};
    \addplot[dotted] coordinates {(0.3,1.2) (0.3,0)};
    \addplot[mark=|, mark size=2pt, black] coordinates {(2,0)};
    \node at (axis cs:2,-0.05) [anchor=north] {$c_i$};
    \node at (axis cs:0.3,-0.05) [anchor=north] {$\tau_i$};
    \node at (axis cs:0,1.2) [anchor=east] {$H_i$};
    \node at (axis cs:0,-0.05) [anchor=north east] {$0$};

      \end{axis}
    \begin{axis}[xshift=7cm,
    domain=-0.6:5,
    samples=100,
    width=6.5cm,
    axis lines=center,
    axis equal,
    xticklabels=\empty,
yticklabels=\empty,
  ]
  \addplot[mark=none,mark size=0.5pt, black] coordinates {(2.2,0)};
  \addplot[mark=none,mark size=0.5pt, black] coordinates {(-0.3,0)};
    \addplot[thick] coordinates {(0,0) (1.4,0) (1.7,1.2) (2,0)};
    \addplot[dotted] coordinates {(0,1.2) (1.9,1.2)};
    \addplot[dotted] coordinates {(1.7,1.2) (1.7,0)};
    \addplot[mark=|, mark size=2pt, black] coordinates {(1.7,0)};
      \addplot[mark=|, mark size=2pt, black] coordinates {(2,0)};
    \node at (axis cs:2,-0.05) [anchor=north] {$c_i$};
    \node at (axis cs:1.7,-0.05) [anchor=north] {$\tau_i$};
    \node at (axis cs:0,1.2) [anchor=east] {$H_i$};
    \node at (axis cs:0,-0.05) [anchor=north east] {$0$};

      \end{axis}
\end{tikzpicture}
\caption{The graph of $h_i$  when $\tau_i\in (0,c_i/2]$ (left) and $\tau_i\in [c_i/2, c_i)$ (right).}
\end{figure}

It is easy to check that
$$
\int_0^{c_i} h_i(t)\, dt=P_i=\int_0^{c_i} h(t)\, dt,
\quad
\int_0^{c_i}th_i(t)\, dt =P_i\tau_i=\int_0^{c_i} th(t)\, dt.
$$
One can check it by computing the integrals directly, but the computations are annoying, or one can argue as follows.
Clearly $\int_0^{c_i} h_i(t)\, dt$ equals the area of the triangle, which is $P_i$. If $h_i$ is regarded as a mass density, the center of mass is located at $\tau_i$ (by symmetry) and hence
$$
\frac{\int_0^{c_i} th_i(t)\, dt}{\int_0^{c_i} h_i(t)\, dt}=\tau_i,
\quad
\int_0^{c_i}th_i(t)\, dt=P_i\tau_i.
$$
Therefore if $i\in\mathcal{I}$ and $\int_{a_i}^{b_i} h(t)\, dt>0$, we define convex $u_i\in C^2([0,c_i])$ and after translation, convex $u_i\in C^2([a_i,b_i])$ using \eqref{eq15}, \eqref{eq16} and \eqref{eq17}.

Until now we did not make any distinction between the two cases when $\mathcal{I}$ is a finite and an infinite set. The case of finite $\mathcal{I}$ is easy: the procedure described above consists of a finite number of corrections on disjoint intervals $[a_i,b_i]$; the $C^2$-differentiability of the resulting function is immediate. The case of infinite $\mathcal{I}$ requires an additional argument. 

The next result provides a crucial estimate for the functions $u_i''$, $i\in \mathcal{I}$. Let
\begin{equation}
\label{eq13}
\eps_i:=
\max\Big\{\sup_{x\in (0,c_i]}\frac{1}{x}\int_0^x h(t)\, dt,\,\, \sup_{x\in (0,c_i]}\frac{1}{x}\int_{c_i-x}^{c_i} h(t)\, dt \Big\}\, .
\end{equation}

\begin{lemma}
\label{T5}
Let $i\in\mathcal{I}$. Then $0\leq u''_i\leq 4\eps_i$ on $[a_i,b_i]$.
\end{lemma}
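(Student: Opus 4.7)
The proof plan is as follows. After translation, we work on $[0,c_i]$, and from the construction $u_i''=h_i\ge 0$ on $[0,c_i]$, so only the upper bound $u_i''\le 4\eps_i$ requires work. Since $h_i$ is a tent function attaining its maximum $H_i$ at $\tau_i$, it suffices to prove $H_i\le 4\eps_i$ in each of the two cases \eqref{eq16} and \eqref{eq17}.

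Consider first the case $\tau_i\in(0,c_i/2]$, where $H_i=P_i/\tau_i$. The idea is a Markov-type first-moment estimate showing that at least half of the mass of $h$ on $[0,c_i]$ lies in $[0,2\tau_i]$. Indeed, since $h\ge 0$,
$$
P_i\tau_i=\int_0^{c_i}th(t)\,dt\ge \int_{2\tau_i}^{c_i}th(t)\,dt\ge 2\tau_i\int_{2\tau_i}^{c_i}h(t)\,dt,
$$
so $\int_{2\tau_i}^{c_i}h(t)\,dt\le P_i/2$, and therefore $\int_0^{2\tau_i}h(t)\,dt\ge P_i/2$. Using $2\tau_i\le c_i$ and the definition \eqref{eq13} of $\eps_i$,
$$
\eps_i\ge\frac{1}{2\tau_i}\int_0^{2\tau_i}h(t)\,dt\ge\frac{P_i}{4\tau_i}=\frac{H_i}{4},
$$
which gives $H_i\le 4\eps_i$, as required.

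The case $\tau_i\in[c_i/2,c_i)$ is handled by the reflection $t\mapsto c_i-t$. Setting $\tilde h(s):=h(c_i-s)$ and $\tilde\tau_i:=c_i-\tau_i\in(0,c_i/2]$, one checks that $\int_0^{c_i}\tilde h=P_i$ and $\int_0^{c_i}s\tilde h(s)\,ds=P_i\tilde\tau_i$. Now $H_i=P_i/(c_i-\tau_i)=P_i/\tilde\tau_i$ is precisely the quantity bounded in the first case, and the right-hand sup in \eqref{eq13} is exactly the left-hand sup for $\tilde h$. Applying the previous argument to $\tilde h$ yields $H_i\le 4\eps_i$.

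Combining the two cases, $0\le h_i\le H_i\le 4\eps_i$ on $[0,c_i]$, and after translating back to $[a_i,b_i]$ this is exactly the claimed bound on $u_i''$. The only real step of substance is the Markov-type observation that the first-moment identity $P_i\tau_i=\int th$ forces at least half of the $h$-mass to lie within distance $2\tau_i$ of the origin; the rest is bookkeeping and symmetry.
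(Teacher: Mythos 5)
Your proof is correct and is essentially the paper's argument: both reduce to showing $H_i\le 4\eps_i$ via the first-moment identity $\int_0^{c_i}th\,dt=P_i\tau_i$ combined with the definition \eqref{eq13} of $\eps_i$ --- the paper splits $\int_0^{c_i}h$ at the point $x=P_i/(2\eps_i)$, while you apply the same Markov-type bound at $x=2\tau_i$, and both treat $\tau_i\in[c_i/2,c_i)$ by the reflected computation. The only point you leave implicit is the degenerate case $\int_{a_i}^{b_i}h\,dt=0$, where $h_i$ is not given by \eqref{eq16}--\eqref{eq17}; there $u_i$ is linear, so $u_i''=0$ and the bound is trivial.
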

\begin{proof}
If $\int_{a_i}^{b_i} h(t)\, dt=0$, $u_i$ is linear and hence $u_i''=0$. Thus we may assume that the integral is positive. Since $u_i''=h_i$ and $0\leq h_i\leq H_i$, it suffices to prove that $H_i\leq 4\eps_i$.
First, assume that $\tau_i\in (0,c_i/2]$. For $x\in (0,c_i)$ we have
\begin{equation}
\label{eq12}
P_i=\int_0^{c_i} h(t)\, dt\leq
x\cdot\frac{1}{x}\int_0^x h(t)\, dt + \frac{1}{x}\int_x^{c_i} th(t)\, dt\leq
x\eps_i+\frac{1}{x}P_i\tau_i.
\end{equation}
Since $P_i\leq c_i\eps_i$, we have $x:=P_i/(2\eps_i)\leq c_i/2$ and hence we can apply it to \eqref{eq12} which readily yields $H_i=P_i/\tau_i\leq 4\eps_i$.

Now, assume that $\tau_i\in [c_i/2,c_i)$. For $x\in (0,c_i)$ we have
$$
P_i\leq\frac{1}{c_i-x}\int_0^x(c_i-t) h(t)\, dt+(c_i-x)\cdot\frac{1}{c_i-x}\int_x^{c_i} h(t)\, dt\leq
\frac{(c_i-\tau_i)P_i}{c_i-x} +(c_i-x)\eps_i,
$$
and taking $x=c_i-P_i/(2\eps_i)\in [c_i/2,c_i)$ yields $H_i=P_i/(c_i-\tau_i)\leq 4\eps_i$.
\end{proof}

\begin{corollary}
\label{T6}
Assume the set $\mathcal{I}$ is infinite. Then we have
\begin{equation}
\label{eq19}
\lim_{\mathcal{I}\ni i\to\infty} \sup_{[a_i,b_i]} |u_i''|=\lim_{\mathcal{I}\ni i\to\infty} \sup_{[a_i,b_i]} |u''|=0,
\end{equation}
and hence
\begin{equation}
\label{eq18}
\lim_{\mathcal{I}\ni i\to\infty} \sup_{[a_i,b_i]}\big(|u_i-u|+|(u_i-u)'|+|(u_i-u)''|\big)=0.
\end{equation}
\end{corollary}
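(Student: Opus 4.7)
The plan is to first show that $\eps_i\to 0$ as $\mathcal{I}\ni i\to\infty$, from which Lemma~\ref{T5} immediately gives $\sup_{[a_i,b_i]}|u_i''|\to 0$. Since the intervals $(a_i,b_i)$ are pairwise disjoint and contained in the bounded interval $(a,b)$, one has $\sum_i c_i\le b-a<\infty$, hence $c_i\to 0$ as $i\to\infty$ in $\mathcal{I}$. After translating $[a_i,b_i]$ to $[0,c_i]$, the two quantities appearing in \eqref{eq13} become $h_{x^+}(a_i)$ and $h_{x^-}(b_i)$ in the notation of \eqref{eq3}. Because $a_i,b_i\in E\subset E_2$ and $h(a_i)=h(b_i)=0$ for $i\in\mathcal{I}$, the uniform convergence \eqref{eq2} yields, for each $\eta>0$, an $r_0>0$ such that $h_{x^+}(a_i)<\eta$ and $h_{x^-}(b_i)<\eta$ for every $x\in(0,r_0]$ and every $i\in\mathcal{I}$. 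Once $c_i\le r_0$, which holds for all but finitely many $i$, the supremum defining $\eps_i$ runs over $x\in(0,c_i]\subset(0,r_0]$, so $\eps_i\le\eta$.

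For the second equality in \eqref{eq19}, note that $u\in C^2(a,b)$, so $u''$ is uniformly continuous on the compact set $I=[\alpha,\beta]$. Since $a_i\in E_1$ for $i\in\mathcal{I}$, the matching conditions force $u''(a_i)=f''(a_i)=h(a_i)=0$. Given $\eta>0$, pick $\tau>0$ from the uniform continuity of $u''$ on $I$; for all $i\in\mathcal{I}$ large enough that $c_i<\tau$, every $x\in[a_i,b_i]$ satisfies $|u''(x)|=|u''(x)-u''(a_i)|<\eta$, which proves $\sup_{[a_i,b_i]}|u''|\to 0$.

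Finally, \eqref{eq18} follows from \eqref{eq19} by a double integration. The matching relations \eqref{eq9} give $(u_i-u)(a_i)=(u_i-u)'(a_i)=0$, so for $x\in[a_i,b_i]$,
$$
|(u_i-u)'(x)|\le c_i\,\sup_{[a_i,b_i]}|(u_i-u)''|,\qquad |(u_i-u)(x)|\le c_i\,\sup_{[a_i,b_i]}|(u_i-u)'|,
$$
while $|(u_i-u)''|\le|u_i''|+|u''|\to 0$ uniformly on $[a_i,b_i]$ by \eqref{eq19} and $c_i\to 0$. The main obstacle is the first step: the supremum in the definition \eqref{eq13} of $\eps_i$ ranges over the full interval $(0,c_i]$, not merely infinitesimal $x$, so one must simultaneously use the smallness of $c_i$ (to bring the entire range of $x$ inside the radius of uniform convergence) and the uniform Lebesgue-point convergence \eqref{eq2} on $E_2$ (which, combined with $h(a_i)=h(b_i)=0$, drives the averages to zero).
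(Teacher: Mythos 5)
Your proof is correct and takes essentially the same route as the paper's: you show $c_i\to 0$, deduce $\eps_i\to 0$ from the uniform convergence \eqref{eq2} together with $h(a_i)=h(b_i)=0$ (so that Lemma~\ref{T5} controls $u_i''$), handle $u''$ via uniform continuity on $I$ and the vanishing of $u''$ at the endpoints, and obtain \eqref{eq18} by integrating twice from $a_i$. The only difference is that you spell out the details the paper leaves implicit, in particular identifying the suprema in \eqref{eq13} with the averages $h_{x^{\pm}}$ at $a_i$ and $b_i$.
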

\begin{proof}
Note that $\lim_{\mathcal{I}\ni i\to\infty} (b_i-a_i)=0$. Hence
$\lim_{\mathcal{I}\ni i\to\infty} \sup_{[a_i,b_i]} |u''|=0$ follows from the uniform continuity of $u''$ on $I=[\alpha,\beta]$ and from the fact that $u''(a_i)=u''(b_i)=0$.
Also, $\lim_{\mathcal{I}\ni i\to\infty} \eps_i=0$. Indeed, convergence of $\eps_i$ to zero follows from the uniform convergence \eqref{eq2}, the definition \eqref{eq13} of $\eps_i$ and the fact that $h(a_i)=h(b_i)=0$. Therefore,
$\lim_{\mathcal{I}\ni i\to\infty} \sup_{[a_i,b_i]} |u_i''|=0$ is a consequence of Lemma~\ref{T5}. This proves \eqref{eq19}.

Clearly, \eqref{eq19} implies that $\lim_{\mathcal{I}\ni i\to\infty} \sup_{[a_i,b_i]}|(u_i-u)''|=0$. Since $u_i(a_i)-u(a_i)=0$ and $u_i'(a_i)-u'(a_i)=0$, \eqref{eq18} follows upon integration.
\end{proof}
Now, we are ready to make the final modification of the function $u$.
Recall that the function $u$ has already been modified on intervals satisfying \eqref{eq5}.

We define
$$
g(x):=
\begin{cases}
u_i(x) & \text{if } x\in [a_i,b_i],\ i\in \mathcal{I},\\
u(x)   & \text{if } x\in (a,b)\setminus\bigcup_{i\in\mathcal{I}} [a_i,b_i].
\end{cases}
$$
\begin{lemma}
\label{T8}
$g\in C^2(a,b)$ is convex.
\end{lemma}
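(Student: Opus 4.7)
The plan is to verify convexity ($g''\geq 0$) and $C^2$-regularity separately. The only delicate case for $C^2$-regularity is at accumulation points of $\{[a_i,b_i]:i\in\mathcal I\}$, where Corollary~\ref{T6} is the decisive tool.

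Convexity is immediate: on each $(a_i,b_i)$ with $i\in\mathcal I$, $g''=u_i''=h_i\geq 0$ by construction~\eqref{eq16}--\eqref{eq17}; on $(a,b)\setminus\bigcup_{i\in\mathcal I}(a_i,b_i)$, $g''=u''\geq 0$ by~\eqref{eq20}. Once $g\in C^2(a,b)$ is proved, $g$ is convex.

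For $C^2$-regularity I set $\phi:=g-u$, which vanishes off $\bigcup_{i\in\mathcal I}[a_i,b_i]$ and equals $u_i-u$ on each $[a_i,b_i]$. Combining \eqref{eq9} with the identities $u''(a_i)=h(a_i)=0=h(b_i)=u''(b_i)$ (valid for $i\in\mathcal I$, since $u''=h$ on $E_1\supset E\ni a_i,b_i$) forces $\phi=\phi'=\phi''=0$ at every endpoint $a_i,b_i$ with $i\in\mathcal I$. Hence $g=u+\phi$ is manifestly $C^2$ at every interior point of each $(a_i,b_i)$ (where $g=u_i$), at every point having a neighborhood disjoint from $\bigcup_{i\in\mathcal I}[a_i,b_i]$ (where $g=u$), and at any endpoint $a_j,b_j$ ($j\in\mathcal I$) that is not an accumulation point of further intervals from $\mathcal I$ (the one-sided $C^2$-expansions of $u$ and $u_j$ match to second order).

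The main obstacle is an accumulation point $x_0\in(a,b)$ of $\{[a_i,b_i]:i\in\mathcal I\}$. Such $x_0$ is a limit of endpoints in $E$, so $x_0\in E$ by closedness of $E$; continuity of $h$ on $E\subset E_2$ combined with $h(a_i)=h(b_i)=0$ for $i\in\mathcal I$ gives $h(x_0)=0$, hence $u''(x_0)=0$. Fix $\eta>0$. Corollary~\ref{T6} yields a finite $F\subset\mathcal I$ such that $\sup_{[a_i,b_i]}(|\phi|+|\phi'|+|\phi''|)<\eta$ for every $i\in\mathcal I\setminus F$. Treating the finitely many $F$-intervals adjacent to $x_0$ by the matching argument of the previous paragraph, I choose a neighborhood $V$ of $x_0$ in which every interval from $\mathcal I$ that meets $V$ is indexed in $\mathcal I\setminus F$. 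For each such $[a_i,b_i]\subset V$, let $c_i\in\{a_i,b_i\}$ be the endpoint lying between $x_0$ and $[a_i,b_i]$; from $\phi(c_i)=\phi'(c_i)=0$ and $|\phi''|\leq\eta$ on $[a_i,b_i]$, two integrations give
\[
|\phi(x)|\leq\tfrac{\eta}{2}(x-c_i)^2,\qquad |\phi'(x)|\leq\eta|x-c_i|,\qquad |\phi''(x)|\leq\eta
\]
for $x\in[a_i,b_i]$. Using $|x-x_0|\geq|x-c_i|$ and the trivial bounds where $\phi$ vanishes, one obtains $|\phi(x)|\leq\tfrac{\eta}{2}|x-x_0|^2$, $|\phi'(x)|\leq\eta|x-x_0|$, and $|\phi''(x)|\leq\eta$ throughout $V$. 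Since $\eta>0$ was arbitrary, $\phi$ is twice differentiable at $x_0$ with $\phi(x_0)=\phi'(x_0)=\phi''(x_0)=0$ and $\phi''$ continuous at $x_0$; hence $g=u+\phi\in C^2$ at $x_0$. The crux is Corollary~\ref{T6}, which tames the infinite family of corrections simultaneously, converted into quadratic smallness of $\phi$ via Taylor's theorem.
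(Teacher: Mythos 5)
Your proof is correct and follows essentially the same route as the paper: the same decomposition $g=u+\sum_{i\in\mathcal I}(u_i-u)$ (your $\phi$), the same reliance on Corollary~\ref{T6} as the decisive estimate, and the same derivation of convexity from \eqref{eq20} together with $u_i''=h_i\ge 0$. Your explicit Taylor-type verification at accumulation points of the intervals $[a_i,b_i]$ is precisely the content that the paper compresses into the assertion that the uniform convergence \eqref{eq18} of the disjointly supported corrections $v_i$ makes $\sum_{i\in\mathcal I}v_i$ a $C^2$ function.
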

\begin{proof}
This is obvious when $\mathcal{I}$ is finite, since by construction $u_i=u$, $u_i'=u'$, $u_i''=u''$ at the endpoints $a_i$ and $b_i$. Assume thus that $\mathcal{I}$ is infinite.
For $i\in \mathcal{I}$ let $v_i:(a,b)\to\bbbr$ be defined by
$$
v_i(x)=
\begin{cases}
u_i(x)-u(x) & \text{if } x\in [a_i,b_i],\\
0           & \text{if } x\in (a,b)\setminus [a_i,b_i].
\end{cases}
$$
Again, since $u_i=u$, $u_i'=u'$, $u_i''=u''$ at the endpoints $a_i$ and $b_i$, it follows that $v_i\in C^2(a,b)$. Therefore, the uniform convergence \eqref{eq18} yields that
$$
\sum_{i\in \mathcal{I}} v_i\in C^2(a,b),
$$
and hence, $g=u+\sum_{i\in\mathcal{I}}v_i\in C^2(a,b)$. Convexity of $g$ follows then from \eqref{eq20} and from the convexity of the functions $u_i$. The proof of the Lemma~\ref{T8} is complete.
\end{proof}
Since $f=g$ on $E$, we have that $|\{f\neq g\}|<\eps$ and this completes the proof of Lemma~\ref{T9}.
\end{proof}
\subsection{Proof in the general case}
\label{uni}
In his section, we will use Lemma~\ref{T9} to prove the general case of Theorem~\ref{T3}.
We will precede the proof with a couple of technical lemmata.
\begin{lemma}
\label{T11}
Suppose that $-\infty<\alpha<\beta<\gamma<\delta<\infty$, $f:[\alpha,\delta]\to\bbbr$ is convex, and $f|_{[\alpha,\beta]}\in C^2([\alpha,\beta])$, $f|_{[\gamma,\delta]}\in C^2([\gamma,\delta])$. Then for any $\eps>0$, there is a convex function $g\in C^2([\alpha,\delta])$, such that
$$
f=g
\quad
\text{on}
\quad
[\alpha,\beta-\eps]\cup [\gamma+\eps,\delta].
$$
\end{lemma}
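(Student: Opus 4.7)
The plan is to adapt the construction of Lemma~\ref{T9}, applied to $f$ on the open interval $(\beta-\eps,\gamma+\eps)$, so as to force the resulting convex $C^2$ approximation to match $f$ to second order at the two endpoints $x_L:=\beta-\eps$ and $x_R:=\gamma+\eps$. Once this matching is achieved, gluing with $f$ on the outside produces the desired $g$: since $f$ is already $C^2$ on $[\alpha,\beta]\supset[\alpha,x_L]$ and on $[\gamma,\delta]\supset[x_R,\delta]$, the $C^2$-matching at $x_L,x_R$ guarantees $g\in C^2([\alpha,\delta])$, and convexity follows from the convexity of each piece together with the matched nonnegative second derivatives at the junctions.

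Concretely, I may assume $\eps<\min(\beta-\alpha,\delta-\gamma)$ (otherwise the claim is essentially vacuous) and pick a small $\eta_0>0$ with $(x_L-\eta_0,x_R+\eta_0)\subset(\alpha,\delta)$, $f\in C^2([x_L-\eta_0,\beta])$, and $f\in C^2([\gamma,x_R+\eta_0])$. I then rerun the construction of Lemma~\ref{T9} on $(x_L-\eta_0,x_R+\eta_0)$ with two adjustments: (i) the Whitney function $u\in C^2$ is chosen to equal $f$ on full neighborhoods of $x_L$ and $x_R$, which is possible because $f$ is already $C^2$ there, so these neighborhoods are admissible in the good set $E_1$; (ii) the threshold parameter called $\delta$ in the proof of Lemma~\ref{T9} (which I rename $\delta_0$ to avoid conflict with the right endpoint $\delta$ of our interval) is taken strictly smaller than any positive value in $\{f''(x_L),f''(x_R)\}$, so that $f''(x_L)\notin(0,\delta_0)$ (automatic if $f''(x_L)=0$) and likewise at $x_R$. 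This forces $x_L,x_R\in E:=E_1\cap E_2\cap E_3$, so no correction interval $(a_i,b_i)$ contains either endpoint.

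The corrected function $g_0=u+\sum_i v_i$, built as in Lemma~\ref{T9}, is convex and of class $C^2$ on $(x_L-\eta_0,x_R+\eta_0)$. Near $x_L$ two cases arise: if $f''(x_L)>0$, a small enough $\delta_0$ forces a whole neighborhood of $x_L$ into $E_3$ (hence into $E$), so no $v_i$ is supported near $x_L$ and $g_0\equiv u$ there; if $f''(x_L)=0$, correction intervals from the family $\mathcal{I}$ may accumulate at $x_L$, but Corollary~\ref{T6} gives $\sup_{[a_i,b_i]}(|v_i|+|v_i'|+|v_i''|)\to 0$, so the series $\sum_i v_i$ and each of its first two derivatives tend to $0$ as $x\to x_L$. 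Either way, $g_0^{(k)}(x_L)=u^{(k)}(x_L)=f^{(k)}(x_L)$ for $k=0,1,2$, and the analogous equalities hold at $x_R$. Setting $g:=f$ on $[\alpha,x_L]\cup[x_R,\delta]$ and $g:=g_0$ on $[x_L,x_R]$ then completes the construction.

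The main obstacle I anticipate is the case $f''(x_L)=0$ (and symmetrically at $x_R$): since the correction intervals can then cluster at the boundary point, one cannot hope for $g_0$ to coincide with $u$ on any neighborhood of $x_L$, so one must argue that the cumulative effect of infinitely many corrections still yields a $C^2$ function with the correct one-sided boundary data at $x_L$. The resolution combines the uniform $C^2$-smallness of Corollary~\ref{T6} with the disjointness (and positive separation) of the intervals $(a_i,b_i)$, which follows from the fact that $E$ has no isolated points; together these force each $v_i$ to be termwise $C^2$-negligible near $x_L$, so that $g_0$ extends $C^2$-smoothly up to and including the boundary with the prescribed values.
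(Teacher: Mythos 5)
Your strategy has a genuine gap at its very first step. The machinery of Lemma~\ref{T9} that you propose to rerun does not start with the Whitney function $u$: it starts by invoking Theorem~\ref{T1} to \emph{replace} $f$ by a convex $C^{1,1}_{\rm loc}$ Lusin approximation. This replacement is not optional — on the middle interval $(\beta,\gamma)$ the function $f$ is merely convex, so $f'$ may have jumps and a singular part, and the identities on which the whole construction rests (Taylor's formula with integral remainder, i.e.\ $f'(c_i)=f'(0)+\int_0^{c_i}h$, and the moment conditions \eqref{eq11}) simply fail without the $C^{1,1}$ reduction. But Theorem~\ref{T1} only controls the \emph{measure} of the set where the replacement differs from $f$; it gives no control at the two specific points $x_L=\beta-\eps$ and $x_R=\gamma+\eps$, where your argument needs the approximant to agree with the original $f$ together with its first two derivatives. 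Since your entire gluing step hinges on exact second-order matching with $f$ at $x_L$ and $x_R$, the step is broken as written; repairing it requires first proving a $C^{1,1}$ analogue of Lemma~\ref{T11} (a gluing lemma that preserves $f$ exactly on the outer pieces), i.e.\ you have reduced the lemma to a variant of itself. A secondary gap of the same flavor: even after a $C^{1,1}$ reduction, arranging the Whitney function $u$ to reproduce the $2$-jet of $f$ exactly at two prescribed points (rather than merely outside a small set) needs an actual patching argument, not just the observation that $f$ is $C^2$ nearby.

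For comparison, the paper's proof avoids all of this with a two-line dichotomy. Either
$f(\gamma+\eps)>f(\beta-\eps)+f'(\beta-\eps)\bigl((\gamma+\eps)-(\beta-\eps)\bigr)$, in which case one directly writes down a convex $C^2$ interpolant on $[\beta-\eps,\gamma+\eps]$ with the prescribed $2$-jets of $f$ at the two endpoints (by prescribing a nonnegative piecewise-linear second derivative with the correct endpoint values, integral, and first moment, exactly as in the discussion around \eqref{eq11} — the strict inequality is what makes this possible); or equality holds in \eqref{eq21}, in which case convexity forces $f$ to be linear on $[\beta-\eps,\gamma+\eps]$, hence already $C^2$ on all of $[\alpha,\delta]$, and one takes $g=f$. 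Nothing about Lusin approximation, good sets, or infinite families of corrections is needed here; the heavy machinery belongs to Lemma~\ref{T9}, not to this gluing statement. I would recommend discarding the rerun-of-Lemma~\ref{T9} approach and arguing via this dichotomy.
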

\begin{proof}
If
\begin{equation}
\label{eq21}
f(\gamma+\eps)> f(\beta-\eps)+f'(\beta-\eps)\big((\gamma+\eps)-(\beta-\eps)\big),
\end{equation}
i.e., if the value of $f(\gamma+\eps)$ lies above the tangent line at $\beta-\eps$, we have no problem with finding $g$. If, however, we have equality in \eqref{eq21} instead of inequality, then $f$ is linear on $[\beta-\eps,\gamma+\eps]$ and it follows that $f\in C^2([\alpha,\delta])$, so in that case we take $g=f$.
\end{proof}

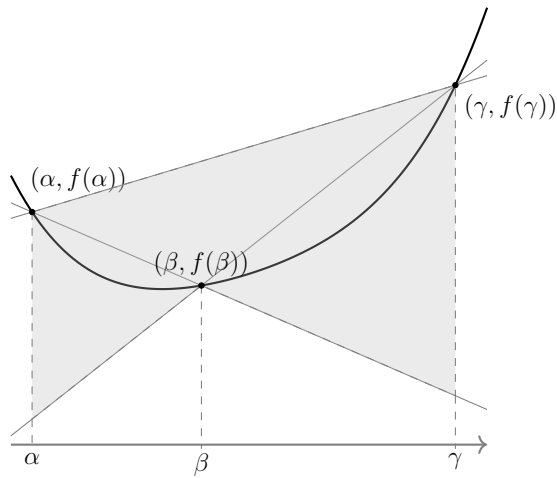
\begin{figure}[ht]\label{fig:2}
\begin{tikzpicture}
  \begin{axis}[
    domain=-1.1:1.15,
    samples=100,
    width=10cm,
    axis lines=none,
    axis equal,
  ]
    \addplot[thick] {0.3*(x^4 + x^2+x)};
    \addplot[mark=*, mark size=1pt, black] coordinates {(-1,0.3)};
    \node[scale=0.8] at (axis cs:-1.05,0.35) [anchor=south west] {$(\alpha, f(\alpha))$};
    \addplot[mark=*, mark size=1pt, black] coordinates {(1,0.9)};
     \node[scale=0.8] at (axis cs:1,0.9) [anchor=north west] {$(\gamma, f(\gamma))$};
    
    \addplot[gray] {0.3*x+0.6};
    \addplot[gray] {-0.4344*x-0.1344};
    \addplot[gray] {0.7896*x+0.1104};
    \addplot[
  fill=lightgray,
  fill opacity=0.3,
  draw=gray,
  dashed
] coordinates {
  (-1, 0.3)
  (-1, -0.6792)
  (-0.2, -0.04752)
  (1, -0.5688)
  (1, 0.9)
  (-1, 0.3) 
};
\addplot[gray,thick,->] {-0.8};
\addplot[draw=gray,dashed] coordinates {(-1, -0.6792)(-1, -0.82)};
\node[scale=0.8] at (axis cs:-1,-0.8) [anchor=north] {$\alpha$};
\addplot[draw=gray,dashed] coordinates {(-0.2, -0.04752)(-0.2, -0.82)};
\node[scale=0.8] at (axis cs:-0.2,-0.8) [anchor=north] {$\beta$};
\addplot[draw=gray,dashed] coordinates {(1, -0.5688)(1, -0.82)};
\node[scale=0.8] at (axis cs:1,-0.8) [anchor=north] {$\gamma$};
\addplot[mark=*, mark size=1pt, black] coordinates {(-0.2,-0.04752)};
     \node[scale=0.8] at (axis cs:-0.2,-0.04752) [anchor=south] {$(\beta, f(\beta))$};
      \end{axis}
\end{tikzpicture}\caption{Lemma \ref{T10}: the graph of $g$ must be contained in the shaded area.}
\end{figure}
\begin{lemma}
\label{T10}
Assume that $-\infty<\alpha<\beta<\gamma<\infty$ and that $f:[\alpha,\gamma]\to\bbbr$ is convex and $L$-Lipschitz. If $g:[\alpha,\gamma]\to\bbbr$ is convex and $f(\alpha)=g(\alpha)$, $f(\beta)=g(\beta)$, $f(\gamma)=g(\gamma)$, then
$$
\sup_{x\in [\alpha,\gamma]} |f(x)-g(x)|\leq 2L|\gamma-\alpha|.
$$
\end{lemma}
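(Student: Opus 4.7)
The plan is to observe that convexity plus agreement at the three points $\alpha,\beta,\gamma$ traps both $f$ and $g$ inside the parallelogram-shaped region depicted in Figure~\ref{fig:2}, and then to estimate the width of that region using only the Lipschitz constant $L$.

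More precisely, I will introduce the two chords
\[
\ell_1(x) := f(\alpha) + m_1(x-\alpha), \qquad \ell_2(x) := f(\beta) + m_2(x-\beta),
\]
where $m_1=(f(\beta)-f(\alpha))/(\beta-\alpha)$ and $m_2=(f(\gamma)-f(\beta))/(\gamma-\beta)$. Since $f$ is $L$-Lipschitz, $|m_1|,|m_2|\leq L$, and convexity of $f$ gives $m_1\leq m_2$. The first step is to use convexity of an arbitrary function $\varphi\in\{f,g\}$ satisfying $\varphi(\alpha)=f(\alpha)$, $\varphi(\beta)=f(\beta)$, $\varphi(\gamma)=f(\gamma)$ to show that on $[\alpha,\beta]$ one has $\ell_2(x)\leq\varphi(x)\leq \ell_1(x)$: the upper bound is simply the chord inequality, while the lower bound follows from the monotonicity of secant slopes, namely $(\varphi(\beta)-\varphi(x))/(\beta-x)\leq (\varphi(\gamma)-\varphi(\beta))/(\gamma-\beta)$, which rearranges to $\varphi(x)\geq \ell_2(x)$. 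Symmetrically, on $[\beta,\gamma]$ one obtains $\ell_1(x)\leq\varphi(x)\leq\ell_2(x)$.

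Applying this to both $\varphi=f$ and $\varphi=g$ shows that on $[\alpha,\beta]$,
\[
|f(x)-g(x)|\leq \ell_1(x)-\ell_2(x),
\]
and analogously $|f(x)-g(x)|\leq \ell_2(x)-\ell_1(x)$ on $[\beta,\gamma]$. The next step is a routine linear-function computation: $\ell_1-\ell_2$ is affine, vanishes at $x=\beta$, and takes the value $(m_2-m_1)(\beta-\alpha)$ at $x=\alpha$, so its maximum over $[\alpha,\beta]$ is $(m_2-m_1)(\beta-\alpha)\leq 2L(\beta-\alpha)\leq 2L(\gamma-\alpha)$. Likewise $\ell_2-\ell_1$ attains maximum $(m_2-m_1)(\gamma-\beta)\leq 2L(\gamma-\alpha)$ on $[\beta,\gamma]$. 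Combining the two cases yields the desired estimate $\sup_{[\alpha,\gamma]}|f-g|\leq 2L(\gamma-\alpha)$.

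There is no real obstacle here; the only point requiring a little care is to write the secant-slope inequality in both directions (chord from above, extended-chord from below) and to check that the resulting sandwich applies to $f$ and $g$ simultaneously, which is exactly why we obtain $|f-g|$ controlled by the vertical width of the shaded region rather than by the separate oscillations of $f$ and of $g$.
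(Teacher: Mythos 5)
Your proof is correct and follows essentially the same route as the paper: both $f$ and $g$ are sandwiched between two secant lines determined by the shared values at $\alpha,\beta,\gamma$, and the vertical gap between those lines is bounded by $2L(\gamma-\alpha)$ via the Lipschitz bound on the secant slopes. The only (immaterial) difference is that on $[\alpha,\beta]$ you use the chord over $[\alpha,\beta]$ as the upper envelope, whereas the paper uses the chord over $[\alpha,\gamma]$.
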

\begin{proof}
Let $x\in [\alpha,\gamma]$. Then $x\in[\alpha,\beta]$ or $x\in[\beta,\gamma]$. Assume that $x\in[\alpha,\beta]$; the estimate in the other case is similar.
It follows from the convexity of the functions $f$ and $g$ that both values $f(x)$ and $g(x)$ are squeezed between the line passing through $(\alpha,f(\alpha))$ and $(\gamma,f(\gamma))$ and the line passing through $(\beta,f(\beta))$ and $(\gamma,f(\gamma))$, that is
$$
f(\gamma)+\frac{f(\gamma)-f(\beta)}{\gamma-\beta}(x-\gamma)\leq \ f(x),\  g(x)\ \leq
f(\gamma)+\frac{f(\gamma)-f(\alpha)}{\gamma-\alpha}(x-\gamma),
$$
see Figure \ref{fig:2}.
Therefore,
$$
|f(x)-g(x)|\leq\Big(\frac{f(\gamma)-f(\alpha)}{\gamma-\alpha}-\frac{f(\gamma)-f(\beta)}{\gamma-\beta}\Big)(x-\gamma)
\leq 2L|\gamma-\alpha|.
$$

\end{proof}

\begin{proof}[Proof of Theorem~\ref{T3}]
Let $\eps_o>0$ and continuous $\eps:(a,b)\to (0,\infty)$ be given. We write
$$
(a,b)=\bigcup_{i=-\infty}^\infty [\alpha_i,\alpha_{i+1}],
\qquad
\alpha_i<\alpha_{i+1}.
$$
Using Lemma~\ref{T9}, we can approximate $f$ on $(\alpha_i,\alpha_{i+1})$ in the Lusin sense by a convex function $g_i\in C^2(\alpha_i,\alpha_{i+1})$ so that the measure
\begin{equation}
\label{eq22}
|\{x\in (\alpha_i,\alpha_{i+1}):\, f(x)\neq g_i(x)\}|
\end{equation}
is as small as we wish. Then, Lemma~\ref{T11} allows us to glue $g_i$'s restricted to slightly shorter compact intervals $I_i\subset (\alpha_i,\alpha_{i+1})$ to a convex function $g\in C^2(a,b)$.
It is important that $g_i$ and $g_i'$ agrees with $f$ and $f'$ at the endpoints of the intervals $I_i$. This will guarantee that $g_i$ on $I_i$ followed by $f$ on the interval between $I_i$ and $I_{i+1}$ (for every $i$), will be convex on $(a,b)$ -- this is needed if we want to apply Lemma~\ref{T11}.

Making sure that the measure \eqref{eq22} is sufficiently small and that
$|(\alpha_i,\alpha_{i+1})\setminus I_i|$ is sufficiently small, we can guarantee that
$$
|\{x\in (a,b):\, f(x)\neq g(x)\}|<\eps_o,
$$
and that
\begin{equation}
\label{eq23}
|\{x\in [\alpha_{i-1},\alpha_{i+2}]: f(x)\neq g(x)\}|<
\min\Big\{\frac{\inf_{t\in [\alpha_{i-1},\alpha_{i+2}]}\eps(t)}{4 L_i},
\alpha_i-\alpha_{i-1}, \alpha_{i+2}-\alpha_{i+1}\Big\},
\end{equation}
where $L_i$ is the Lipschitz constant of $f$ restricted to $[\alpha_{i-1},\alpha_{i+2}]$ (recall that a convex function defined on an open set is always locally Lipschitz \cite[Theorem 3.1.2]{HiriartUrruty}).
Let $\xi_i$ be a number between the numbers on the left-hand side and the right-hand side in \eqref{eq23}.

For any $x\in [\alpha_i,\alpha_{i+1}]$, the interval $[x-\xi_i,x]\subset [\alpha_{i-1},\alpha_{i+2}]$ contains a point where $f=g$ (a set of positive measure, in fact) and the interval $[x,x+\xi_i]\subset [\alpha_{i-1},\alpha_{i+2}]$ contains two points where $f=g$.
Since
$$
|(x+\xi_i)-(x-\xi_i)|=2\xi_i<\frac{\inf_{t\in [\alpha_{i-1},\alpha_{i+2}]}\eps(t)}{2 L_i},
$$
Lemma~\ref{T10} implies that
$$
|f(x)-g(x)|\leq 2L_i|(x+\xi_i)-(x-\xi_i)|<\inf_{t\in [\alpha_{i-1},\alpha_{i+2}]}\eps(t)\leq \eps(x).
$$
This completes the proof of the theorem.
\end{proof}


\begin{thebibliography}{00}
\frenchspacing
\setlength{\parskip}{0pt}
\setlength{\itemsep}{1 pt plus 0.5 pt minus 0.5 pt}

\bibitem{Alberti2}
{\sc Alberti, G.:} On the structure of singular sets of convex functions,
{\em Calc.\ Var.\ Partial Differential Equations} 2 (1994), 17--27.


\bibitem{ACH}
{\sc Azagra, D., Cappello, A., Haj\l{}asz, P.:}
A geometric approach to second-order differentiability of convex functions.
{\em Proc.\ Amer.\ Math. Soc. Ser. B} 10 (2023), 382--397.

\bibitem{ADH}
{\sc Azagra, D., Drake, M., Haj\l{}asz, P.:}
$C^2$-Lusin approximation of strongly convex functions. {\em Inventiones Math.} 236 (2024), 1055--1082.

\bibitem{AH}
{\sc Azagra, D., Haj\l{}asz, P.:}
Lusin-type properties of convex functions and convex bodies.
{\em J. Geom.\ Anal.} 31 (2021), 11685--11701.



\bibitem{EvansGangbo}
{\sc Evans, L. C., Gangbo, W.:} Differential equations methods for the Monge-Kantorovich mass transfer problem. {\em Mem.\ Amer.\ Math.\ Soc.} 137 (1999), no. 653.


\bibitem{HiriartUrruty}
{\sc Hiriart-Urruty, J.-B., Lemar\'echal, C.:} {\em Fundamentals of Convex Analysis.} Corrected 2nd printing, {Grundlehren Text Editions}. Springer 2004.

\bibitem{Imomkulov}
{\sc Imomkulov S. A.:} Twice differentiability of subharmonic
  functions. (Russian) {\em Izv.\ Ross.\ Akad.\ Nauk Ser.\ Mat.} 56 (1992),
877--888; translation in Russian {\em Acad.\ Sci.\ Izv.\ Math.} 41 (1993),
157--167.

\bibitem{stein}
{\sc Stein, E. M.:} {\em Singular integrals and differentiability properties of functions.} Princeton Mathematical Series, No. 30 Princeton University Press, Princeton, N.J. 1970.


\bibitem{whitney}
{\sc Whitney, H.:}
On totally differentiable and smooth functions.
{\em Pacific J. Math.} 1 (1951), 143--159.




\end{thebibliography}
\end{document}